\theoremstyle{plain}
\newtheorem{theorem}{Theorem}
\newtheorem{claim}[theorem]{Claim}
\newtheorem{conjecture}[theorem]{Conjecture}
\newtheorem{corollary}[theorem]{Corollary}
\newtheorem{lemma}[theorem]{Lemma}
\newtheorem{definition}[theorem]{Definition}
\newtheorem{observation}[theorem]{Observation}
\theoremstyle{definition}
\newtheorem{fact}[theorem]{Fact}
\newtheorem{remark}[theorem]{Remark}
\newcommand{\F}{\mathbb{F}}
\newcommand{\Z}{\mathbb{Z}}
\newcommand{\x}{\mathbf{x}}
\newcommand{\y}{\mathbf{y}}
\newcommand{\tree}{\mathscr{T}}
\newcommand{\xmin}{\tilde{\mathbf{x}}}
\newcommand{\hfk}{\widehat{HFK}}
\DeclareMathOperator{\rank}{rank}
\title{Pretzel knots with $L$-space surgeries}
\author[Lidman]{Tye Lidman}
\address{Department of Mathematics \\
          The University of Texas\\
          Austin, TX 78712, USA
         }
\email{tlid@math.utexas.edu}
\author[Moore]{Allison Moore}
\address{Department of Mathematics \\
          The University of Texas\\
         Austin, TX 78712, USA
        }
\email{moorea8@math.utexas.edu}
\begin{document}
\maketitle
\begin{abstract}
A rational homology sphere whose Heegaard Floer homology is the same as that of a lens space is called an $L$-space.  We classify pretzel knots with any number of tangles which admit $L$-space surgeries.  This rests on Gabai's classification of fibered pretzel links.  \end{abstract}

\section{Introduction}

The Heegaard Floer homology of three-manifolds and its refinement for knots, knot Floer homology, have proved to be particularly useful for studying Dehn surgery questions in three-manifold topology.  Recall that the knot Floer homology of a knot $K$ in the three-sphere is a bigraded abelian group, 
\[
\widehat{HFK}(K) = \oplus_{m,s} \widehat{HFK}_m(K,s),
\] 
introduced by Ozsv\'ath and Szab\'o \cite{OS:KnotInvariants} and independently by Rasmussen \cite{Rasmussen:Thesis}.  The graded Euler characteristic is the symmetrized Alexander polynomial of $K$~\cite{OS:KnotInvariants},
\[
\Delta_K(t) = \sum_{s} \chi (\hfk(K, s)) \cdot t^s.
\]
These theories have been especially useful for studying lens space surgeries.  For example, if $K \subset S^3$ admits a lens space surgery, then for all $s \in \mathbb{Z}$, we have $\widehat{HFK}(K,s) \cong  0$ or $\mathbb{Z}$ \cite[Theorem 1.2]{OS:LensSpaceSurgeries}.  Knot Floer homology detects both the genus of $K$ by $g(K) = \max \{ s \mid \widehat{HFK}(K,s) \neq 0\}$ \cite{OS:HolDisksGenusBounds} and the fiberedness of $K$, by whether $\hfk(K,g(K))$ is isomorphic to $\Z$ \cite{Ghiggini:Fibered, Ni:Fibred}. Together, this implies that a knot in $S^3$ with a lens space surgery is fibered. 
Indeed, this result applies more generally to knots in $S^3$ admitting $L$-space surgeries.  Recall that a rational homology sphere $Y$ is an \emph{$L$-space} if $|H_1(Y;\mathbb{Z})| = \rank \widehat{HF}(Y)$, where $\widehat{HF}$ is the ``hat'' flavor of Heegaard Floer homology. The class of $L$-spaces includes all lens spaces, and more generally, three-manifolds with elliptic geometry \cite[Proposition 2.3]{OS:LensSpaceSurgeries} (or equivalently, with finite fundamental group by the Geometrization Theorem \cite{KL2008}).  A knot admitting an $L$-space surgery is called an {\em $L$-space knot}.  



The goal of this paper is to classify $L$-space pretzel knots\footnote{Throughout this paper, we use the convention that pretzel knots are prime.  It is known to experts that $L$-space knots are prime, so there is no loss of generality with this assumption.}.  For notation, we use $(n_1,\ldots,n_r)$ to denote the pretzel knot with $r$ tangles, where the $i$th tangle consists of $n_i \in \Z$ half-twists.  We use $T(a,b)$ to denote the $(a,b)$-torus knot.      
\begin{theorem}\label{thm:pretzels}
Let $K$ be a pretzel knot.  Then, $K$ admits an $L$-space surgery if and only if $K$ is isotopic to $ \pm (-2,3,q)$ for odd $q \geq 1$ or $T(2,2n+1)$ for some $n$.  
\end{theorem}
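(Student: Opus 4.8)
The plan is to handle the two implications separately, with the ``only if'' direction carrying essentially all of the work. For the reverse implication I would verify directly that each listed knot is an $L$-space knot. The torus knots $T(2,2n+1)$ admit lens space surgeries by the classical analysis of surgeries on torus knots, hence are $L$-space knots. For $\pm(-2,3,q)$ with $q$ odd and $q\geq 1$, I would note that small members coincide with torus knots (for instance $(-2,3,3)=T(3,4)$ and $(-2,3,5)=T(3,5)$) and that the remaining knots $(-2,3,2n+1)$ are known $L$-space knots, either by citation or by exhibiting an explicit $L$-space surgery slope. Since mirroring carries $L$-space knots to $L$-space knots (exchanging positive and negative surgery coefficients), the overall sign is automatic.

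For the forward implication the starting point is the observation already recorded above that an $L$-space knot is fibered: the genus is detected by $\hfk$ \cite{OS:HolDisksGenusBounds}, fiberedness is detected by whether $\hfk(K,g(K))\isom\Z$ \cite{Ghiggini:Fibered, Ni:Fibred}, and \cite[Theorem 1.2]{OS:LensSpaceSurgeries} forces each group $\hfk(K,s)$ to be $0$ or $\Z$. I would therefore restrict attention to \emph{fibered} pretzel knots and invoke Gabai's classification of fibered pretzel links to reduce the problem to a short, explicit list of parametrized families.

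The tool that eliminates the unwanted families is the following consequence of $\hfk(K,s)\in\{0,\Z\}$: for an $L$-space knot the symmetrized Alexander polynomial
\[
\Delta_K(t)=\sum_s \chi(\hfk(K,s))\,t^s
\]
has every coefficient in $\{0,\pm 1\}$, with the nonzero coefficients alternating in sign. For each family in Gabai's list I would compute $\Delta_K(t)$ from the Seifert matrix of the fiber surface and, whenever the family is not among those in the statement, exhibit either a coefficient of absolute value $\geq 2$ or two consecutive nonzero coefficients of the same sign. To keep the analysis finite it is convenient to first bound the number of tangles $r$: as the tangle parameters grow so does the genus, and with it the size of the coefficients of $\Delta_K$, so only finitely many candidates can meet the alternating $\pm 1$ condition, and the remaining small cases are then dispatched individually. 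Matching the survivors against the list, and accounting for permutations of tangles, mirror images, and the torus-knot coincidences above, should produce exactly $\pm(-2,3,q)$ and $T(2,2n+1)$.

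The principal obstacle is this case analysis: faithfully transcribing Gabai's classification into a workable list, computing the Alexander polynomials of the resulting families, and proving that outside the stated families some coefficient must violate the $L$-space constraint. Controlling the growth of these coefficients so as to bound $r$ and thereby reduce to finitely many explicit knots is the technical heart of the argument.
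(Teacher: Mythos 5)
Your overall architecture matches the paper's: verify the listed knots are $L$-space knots, use fiberedness plus Gabai's classification to restrict to an explicit list of families, and then kill the unwanted families via the constraint that an $L$-space knot has $\hfk(K,s)\cong 0$ or $\Z$ in each grading. However, there is one genuine gap: your plan relies \emph{entirely} on the Alexander polynomial obstruction (coefficients in $\{0,\pm1\}$, alternating signs), and this is not sufficient. The fibered Type 3 pretzel knot $(3,-5,3,-2)$ has
\[
\Delta_K(t) = t^{-3}-t^{-2}+1-t^2+t^3,
\]
which has all nonzero coefficients equal to $\pm 1$ and alternating in sign, so it passes every test you propose to apply; yet it is not an $L$-space knot. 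The paper handles this single exceptional case by computing $\hfk(K,s;\F_2)$ directly (via Droz's program) and observing groups of rank at least $2$, e.g.\ $\hfk$ in Alexander grading $1$ has total dimension $7$. Your proof as outlined would terminate with $(3,-5,3,-2)$ still on the list of survivors and no way to remove it. Any complete argument must at some point leave the Euler-characteristic level and use the full knot Floer homology (or some other invariant) for this knot.

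A secondary, less fatal concern: your plan to ``bound the number of tangles $r$'' and reduce to finitely many explicit knots does not reflect how the elimination actually has to go. The families that fail are themselves infinite (e.g.\ $(-2,r,q)$ for all large odd $r,q$, or Type 2A knots with arbitrarily many tangles), and they are ruled out not by a finiteness reduction but by inequalities valid for all parameters --- in the paper, either a determinant-versus-genus estimate ($\det(K)>2g(K)+1$ forces some $|a_s|>1$) or a count of Kauffman states in Alexander grading $-g(K)+1$ showing $|a_{-g(K)+1}|\geq 2$. Your heuristic that ``the coefficients grow with the parameters'' is the right instinct, but it is precisely the content that needs proving family by family; computing $\Delta_K$ from a Seifert matrix for a general member of each of Gabai's families and extracting a uniform coefficient bound is substantially harder than the targeted determinant and state-sum arguments the paper uses. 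Only a handful of sporadic knots end up requiring individual Alexander polynomial computations.
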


We first remark that the pretzel knots $(-2,3,1) $, $(-2,3,3)$ and $(-2,3,5)$ are isotopic to the torus knots $T(2,5)$, $T(3,4)$, and $T(3,5)$, respectively. In general, torus knots are well-known to admit lens space surgeries \cite{Mos:TorusKnots}; the hyperbolic pretzel knot $(-2,3,7)$ is also known to have two lens space surgeries \cite{FS:LensSpaces}.  The knot $(-2,3,9)$ has two finite, non-cyclic surgeries \cite{BH:SphericalSpaceForms}.  Finally, the remaining knots, $(-2,3,q)$ for $q \geq 11$, are known to have Seifert fibered $L$-space surgeries with infinite fundamental group \cite{OS:LensSpaceSurgeries}.  Therefore, in this paper we show that no other pretzel knot admits an $L$-space surgery.  This will be proved by appealing to Gabai's classification of fibered pretzel links \cite{Gabai:Detecting} and the state-sum formula for the Alexander polynomial \cite{Kauffman:Formal, OS:Alternating}.  

Using Theorem~\ref{thm:pretzels}, we are able to easily recover the classification of pretzel knots which admit surgeries with finite fundamental group due to Ichihara and Jong.  

\begin{corollary}[Ichihara-Jong \cite{ID:CyclicFinite}]\label{cor:finitefillings}
The only non-trivial pretzel knots which admit non-trivial finite surgeries up to mirroring are $(-2,3,7)$, $(-2,3,9)$, $T(3,4)$, $T(3,5)$, and $T(2,2n+1)$ for $n > 0$. 
\end{corollary}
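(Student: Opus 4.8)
The plan is to convert the finite-surgery hypothesis into an $L$-space hypothesis and then read the answer off Theorem~\ref{thm:pretzels}. First I would observe that if a non-trivial pretzel knot $K$ admits a non-trivial surgery $Y$ with $|\pi_1(Y)|<\infty$, then $Y$ is a rational homology sphere with finite fundamental group; by the Elliptization (Geometrization) theorem such a $Y$ carries an elliptic geometry, and by the result quoted in the introduction (\cite[Proposition 2.3]{OS:LensSpaceSurgeries}) every such manifold is an $L$-space. Thus $K$ is an $L$-space knot, and Theorem~\ref{thm:pretzels} forces $K$ to be, up to mirroring, either a torus knot $T(2,2n+1)$ or a pretzel knot $(-2,3,q)$ with $q\geq 1$ odd. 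Since a finite surgery on $K$ corresponds to a finite surgery on its mirror (with the slope negated), working up to mirroring is harmless.

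Next I would check that every knot on the Ichihara--Jong list genuinely occurs, which is immediate from the remarks following Theorem~\ref{thm:pretzels}. The torus knots $T(2,2n+1)$ with $n>0$ admit lens space (hence cyclic, hence finite) surgeries \cite{Mos:TorusKnots}; the identifications $(-2,3,3)=T(3,4)$ and $(-2,3,5)=T(3,5)$ place those two pretzels on the list as torus knots; $(-2,3,1)=T(2,5)$ is already counted among the $T(2,2n+1)$; the knot $(-2,3,7)$ has two lens space surgeries \cite{FS:LensSpaces}; and $(-2,3,9)$ has two finite, non-cyclic surgeries \cite{BH:SphericalSpaceForms}. Hence the only remaining task is the converse: to show that no other candidate produced by Theorem~\ref{thm:pretzels} admits a finite surgery.

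The single family left to eliminate is $(-2,3,q)$ for $q\geq 11$, and this is where the real content lies. Here I would invoke the structure theory of $L$-space knots: after possibly replacing $K$ by its mirror, the surgery $S^3_{p/q}(K)$ is an $L$-space precisely when $p/q\geq 2g(K)-1$, so \emph{every} slope that could conceivably yield a finite surgery lies in this half-line. It therefore suffices to show that, for $q\geq 11$, each such surgery has infinite fundamental group. This I would extract from the Seifert-fibered description of these surgeries due to \os\ \cite{OS:LensSpaceSurgeries}: each resulting $L$-space fibers over a base orbifold $S^2(a,b,c)$, and one checks that the orbifold data satisfy $\tfrac1a+\tfrac1b+\tfrac1c\leq 1$ once $q\geq 11$. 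A Seifert fibered space over such a Euclidean or hyperbolic base orbifold has infinite fundamental group, contradicting finiteness.

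I expect the main obstacle to be exactly this last step. The cited remark only asserts the \emph{existence} of infinite-$\pi_1$ Seifert fibered $L$-space surgeries, whereas the argument needs the infinite-$\pi_1$ conclusion to hold uniformly across the entire interval $[\,2g(K)-1,\infty)$ of $L$-space slopes, not merely at the integral slope singled out in \cite{OS:LensSpaceSurgeries}. Concretely, one must pin down the base-orbifold invariants $(a,b,c)$ of every $L$-space surgery on $(-2,3,q)$ and confirm $\tfrac1a+\tfrac1b+\tfrac1c\le 1$ for all $q\geq 11$; the finite cases $q\in\{1,3,5,7,9\}$ should be precisely those for which this inequality is reversed, in agreement with the finite surgeries recorded in the second paragraph.
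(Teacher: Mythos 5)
Your reduction to Theorem~\ref{thm:pretzels} and your verification of the positive direction match the paper exactly: a finite filling has elliptic geometry by Geometrization and is therefore an $L$-space, so the candidates are $T(2,2n+1)$ and $\pm(-2,3,q)$, and the knots on the list are realized by the references in the introduction. The divergence, and the problem, is in how you eliminate $(-2,3,q)$ for $q\geq 11$. The paper does this by citing Mattman's character-variety theorem, which says that among $(-2,3,q)$ with $q\neq 1,3,5$ only $(-2,3,7)$ and $(-2,3,9)$ admit finite surgeries; that external input is the entire content of the converse direction.

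Your proposed substitute does not work as described. You assert that each $L$-space surgery on $(-2,3,q)$ ``fibers over a base orbifold $S^2(a,b,c)$,'' but these knots are hyperbolic for $q\geq 7$, and all but finitely many of their Dehn fillings (including all but finitely many of the $L$-space fillings in the half-line $[2g-1,\infty)$) are hyperbolic manifolds, which are not Seifert fibered at all. The remark from \cite{OS:LensSpaceSurgeries} quoted in the introduction only exhibits \emph{some} Seifert fibered $L$-space surgeries; it does not give Seifert data for every $L$-space slope, because no such data exists for the hyperbolic fillings. One can salvage the shape of your argument by noting that a finite filling is itself Seifert fibered with spherical base orbifold, so it suffices to show no filling of $(-2,3,q)$, $q\geq 11$, is such a manifold --- but deciding which slopes on these hyperbolic knots yield Seifert fibered or finite fillings is precisely the hard classification problem, and answering it requires genuinely non-Floer input (Culler--Shalen/character-variety methods as in Mattman, or a classification of exceptional surgeries). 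You correctly identify this as the main obstacle, but the plan you sketch for closing it --- computing $(a,b,c)$ for every $L$-space surgery and checking $\tfrac1a+\tfrac1b+\tfrac1c\leq 1$ --- rests on the false premise that those invariants exist for every such surgery, so the gap remains open in your write-up.
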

\begin{proof}
As discussed above, the knots in the statement of the corollary are known to admit finite surgeries.  Therefore, it remains to rule out the case of $(-2,3,q)$ for odd $q \geq11$. Using the theory of character varieties, Mattman proved that the only knots of the form $K = (-2,3,q)$ with $q \neq 1,3,5$ which admit a finite surgery are $(-2,3,7)$ and $(-2,3,9)$  \cite{Mattman:CSPretzels}.  This completes the proof.  
\end{proof}

\begin{remark}
In fact, Ichihara and Jong show Corollary~\ref{cor:finitefillings} holds more generally for Montesinos knots \cite{ID:CyclicFinite}. Their proof uses similar arguments to those in this paper, but first appeals to an analysis of essential laminations on the exteriors of Montesinos knots by Delman.  This allows them to restrict their attention to a few specific families of pretzel knots before reducing to the case of the $(-2,3,q)$-pretzel knots.    
\end{remark}

Finally we observe that while many pretzel knots have essential Conway spheres, the pretzel knots with $L$-space surgeries do not.  We conjecture that this holds for $L$-space knots in general. If true, this fact would imply that an $L$-space knot admits no nontrivial mutations.
\begin{conjecture}\label{conj:lspaceconwaysphere}
If $K$ is an $L$-space knot, then there are no essential Conway spheres in the complement of $K$.  
\end{conjecture}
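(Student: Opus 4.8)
The plan is to argue by contradiction: suppose $K$ is an $L$-space knot whose exterior $M = S^3 \setminus N(K)$ contains an essential Conway sphere $S$, so that the $4$-punctured sphere $P = S \cap M$ is incompressible and meridionally incompressible in $M$. I would first exploit the rigidity that the $L$-space hypothesis forces on $K$. As recalled in the introduction, $K$ is fibered, so $M$ fibers over $S^1$ with fiber a minimal-genus Seifert surface $F$ and some monodromy $\phi$; moreover, by \os, each group $\hfk(K,s)$ is $0$ or $\Z$, and the refined statement is that $\cfki(K)$ is a ``staircase'' complex, so $\Delta_K(t)$ is monic with all nonzero coefficients $\pm 1$ and alternating in sign. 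The aim is to show that an essential Conway sphere is incompatible with at least one of these features.

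The first, more topological line of attack is to put $P$ into good position with respect to the fibration $M \to S^1$. Since a Seifert surface meets $\partial M$ in a single longitude while $P$ meets $\partial M$ in four meridians, $P$ cannot be isotopic to a fiber; using the theory of incompressible surfaces in surface bundles one can then arrange $P$ to sit in a controlled way relative to the suspension flow and read off constraints on $\phi$ from the decomposition of $F$ along the arcs $P \cap F$. The hope is that such a configuration forces $\phi$ to be reducible, or at least to violate the right-veering condition that the $L$-space property imposes via the tight contact structure supported by the open book of $K$. This is exactly where Gabai's sutured-manifold machinery, already central to Theorem~\ref{thm:pretzels}, should enter: tautness of the foliation carried by $F$ ought to obstruct the vertical annuli and essential tangles produced by a Conway sphere.

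A complementary, Floer-theoretic line of attack is to decompose the sutured exterior of $K$ along a surface adapted to the Conway tangle structure and apply Juh\'asz's surface decomposition theorem, which controls how sutured Floer homology changes under such a cut. The goal would be to compute $\hfk(K)$ by gluing the sutured, or bordered-sutured, invariants of the two tangles meeting along the $4$-punctured sphere, and to show that any incompressible, meridionally incompressible gluing forces $\rank \hfk(K)$ to exceed the value allowed for an $L$-space knot.

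The main obstacle is intrinsic and is precisely what keeps the statement a conjecture: Conway mutation along $S$ preserves $\Delta_K$ and is expected---and in several grading and coefficient conventions known---to preserve knot Floer homology. Hence neither the thin Alexander polynomial nor the staircase shape of $\cfki(K)$ can by itself detect, much less obstruct, an essential Conway sphere, and any successful argument must use an invariant that is at once sensitive to the tangle decomposition and genuinely constrained by the $L$-space condition. Performing the gluing of the two tangle invariants across the $4$-punctured sphere and ruling out every admissible configuration is the crux, and it is here that I expect the real difficulty---and the need for input beyond the methods of the present paper---to lie.
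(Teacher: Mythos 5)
The statement you are addressing is stated in the paper as Conjecture~\ref{conj:lspaceconwaysphere}; the paper offers no proof of it. The only evidence supplied there is the observation that the pretzel knots appearing in Theorem~\ref{thm:pretzels} --- the $\pm(-2,3,q)$ knots and the torus knots $T(2,2n+1)$ --- happen to have no essential Conway spheres, so the conjecture is consistent with the classification. There is therefore no ``paper's own proof'' to compare against, and your submission should not be read as an alternative derivation of an established result.

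More importantly, your proposal is not a proof: it is a pair of candidate strategies, neither of which is carried out, and you say as much in your final paragraph. Concretely, the first line of attack (positioning the $4$-punctured sphere relative to the fibration and deriving a contradiction with right-veering monodromy or tautness) stops at ``the hope is that such a configuration forces $\phi$ to be reducible''; no lemma is proved, and it is not clear that an essential Conway sphere obstructs either property --- plenty of fibered knots with right-veering monodromy contain essential Conway spheres, so some genuinely $L$-space-specific input is required and is not identified. The second line of attack (decomposing the sutured exterior along the Conway sphere and gluing tangle invariants) names the correct difficulty but does not perform the gluing, does not specify which tangle invariant is to be used, and does not explain how the $L$-space condition (for instance, the staircase shape of $CFK^{\infty}$) would constrain the two tangle pieces. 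Your own closing observation --- that $\Delta_K$ and, in the relevant conventions, $\hfk$ are mutation-invariant, so the invariants actually controlled by the $L$-space hypothesis in this paper cannot see the Conway sphere --- is precisely the reason the argument as outlined cannot close. What you have written is a reasonable research plan (and indeed resembles the tangle-decomposition strategy that one would need to pursue), but the crux step is missing, so the statement remains, as in the paper, a conjecture rather than a theorem.
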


\section*{Acknowledgments} We would like to thank Cameron Gordon, Jennifer Hom, and Matthew Hedden for helpful discussions.  We would also like to thank Liam Watson for suggesting we check Conjecture~\ref{conj:lspaceconwaysphere} on such a family of knots. The first author acknowledges the partial support of NSF RTG grant DMS-0636643 and the second author would like to acknowledge the support of NSF RTG grant DMS-1148490.
\section{Background}

Throughout, $K$ (resp. $L$) is an oriented knot (resp. link) in $S^3$. Let $g(K)$ denote the genus of $K$.  Let $L=(n_1,\dots, n_r)$ be a pretzel link. We will also use the integer $n_i$ to refer to this specific tangle in the pretzel projection, where $|n_i|$ is the \emph{length} of the tangle $n_i$. Notice that tangles of length one can be permuted to any spot in a pretzel link by flype moves. Furthermore, if there exist $n_i=+1$ and $n_j=-1$ in $L$, then $n_i$ and $n_j$ can be pairwise removed by flyping followed by an isotopy. Unless otherwise stated, we assume any diagram of a pretzel link $L$ is in pretzel form and that $r$ is the minimal possible number of strands to present $L$ as a pretzel projection. Note this implies that there do not exist indices $i$ and $j$ such that $n_i = \pm1$ and $n_j = \mp 2$. Throughout, we will implicitly assume the classification of pretzel knots due to Kawauchi~\cite{Kawauchi:Pretzels}. 
 
\subsection{Determinants of pretzel knots}
\label{subsec:det}
Since $\chi(\widehat{HFK}(K,s)) = a_s$, the coefficient of $t^s$ in the symmetrized Alexander polynomial of $K$, this will give us an easy way to approach Theorem~\ref{thm:pretzels} in many cases; whenever there exists a coefficient $a_s$ of $\Delta_K(t)$ with $|a_s|>1$, $K$ is not an $L$-space knot \cite{OS:LensSpaceSurgeries}.
We therefore establish the following lemma.
\begin{lemma}
\label{lem:det}
If $det(K)>2g(K)+1$ then $\Delta_K(t)$ contains some coefficient $a_s$ with $|a_s|>1$.
\end{lemma}
\begin{proof}
If the coefficients of $\Delta_K(t)$ are at most one in absolute value, then
\[ 
	\det(K) =| \Delta_K(-1)| \leq \sum_s |a_s| \leq 2g(K)+1. \qedhere
\]
\end{proof}
Suppose that $Y$ is a Seifert fibered rational homology sphere with base orbifold $S^2$ and Seifert invariants $(b; (a_1,b_1), \dots (a_r,b_r))$. Then
\[
| H_1(Y;\mathbb{Z}) | = | a_1\cdots a_r \cdot (b + \sum_{i=1}^{r} \frac{b_i}{a_i}) |
\]
(see for instance~\cite{Saveliev:Invariants}). The branched double covers of Montesinos knots (and consequently, pretzel knots) are such Seifert fibered spaces. 
If $K =(n_1,\dots, n_k, \underbrace{1,\dots, 1}_{d})$, where $|n_i|>1$ for $1\leq i \leq k$, then $\Sigma_2(K)$ has Seifert invariants $(d; (n_1, 1), \dots, (n_k,1))$. Therefore, 
\begin{equation}
\label{eqn:det}
	 det(K) = |H_1(\Sigma_2(K))|  = \big| n_1\cdots n_k \cdot (d +\sum_{i=1}^k \frac{1}{n_i} ) \big|.
\end{equation}
As permuting tangles in a pretzel knot corresponds with doing a series of Conway mutations, $\Delta_K(t)$, and consequently $det(K)$, are unchanged. Invariance of the determinant under permutations is also evident from Equation~\ref{eqn:det}. Since the symmetrized version of the Alexander polynomial of a fibered knot is monic of degree $g(K)$, when $K$ is fibered and the mutation preserves fiberedness, the genus of $K$ is also unchanged.
\subsection{Fibered pretzel links}
As mentioned earlier, if $K$ is an $L$-space knot then $K$ is fibered. Theorem~\ref{thm:pretzels} is therefore automatic for any non-fibered knot. Thus for the proof of Theorem~\ref{thm:pretzels} we will only be interested in fibered pretzel knots.  In~\cite[Theorem 6.7]{Gabai:Detecting}, Gabai classified oriented fibered pretzel links together with their fibers; we recall this below in Theorem~\ref{thm:fiberedpretzels}. An oriented pretzel link $L$ may be written
\[
	L=\left( m_1, m_{11}, m_{12}, \dots, m_{1\ell_1} ,m_2, m_{21},\dots, m_{2\ell_2},  \dots m_R, m_{R1},\dots, m_{R\ell_R} \right) ,
\]
where $m_i$ denotes a tangle in which the two strands are oriented consistently (i.e. both up or both down) and $m_{ij}$ denotes a tangle where the two strands are oriented inconsistently (i.e. one up and one down). 
An oriented pretzel link falls into one of three types which can be easily ascertained from a diagram: a Type 1 link contains no $m_i$, a Type 2 link contains both an $m_i$ and an $m_{ij}$, and a Type 3 link contains no $m_{ij}$. Moreover, associated to a Type 2 or Type 3 link $L$ will be an auxiliary oriented pretzel link $L'$,
\begin{figure}
	\centering
      	\includegraphics[height=35mm]{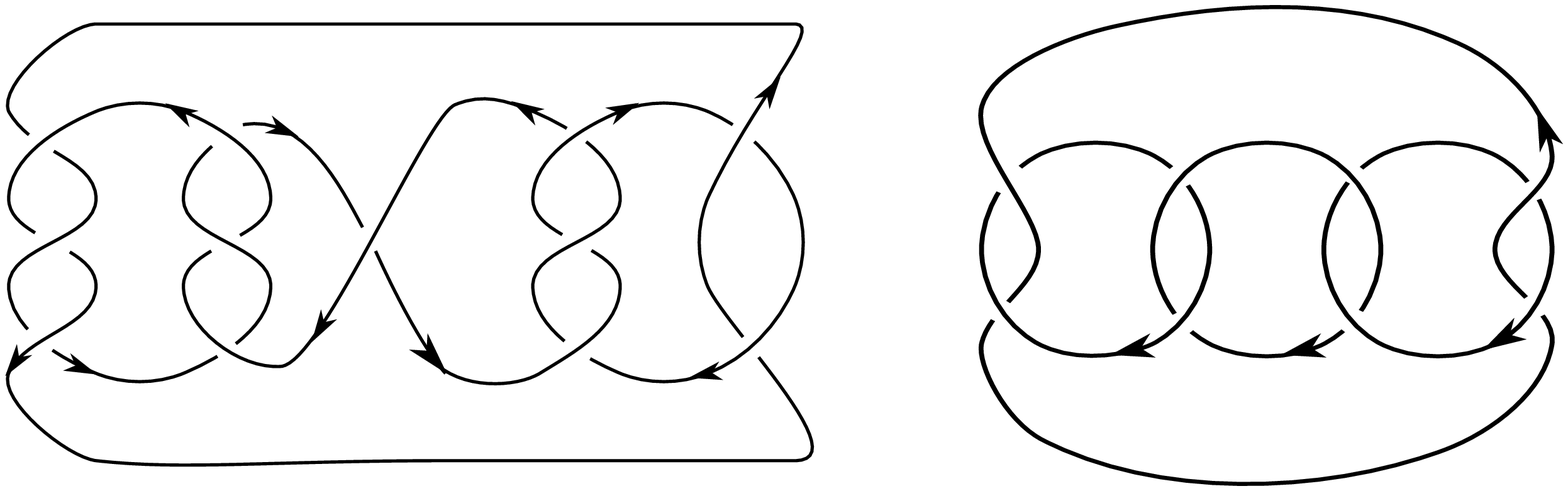}
          \caption{The pretzel knot $(3,-3,1,3,2)$ and its associated auxiliary link $(-2,2,-2,2)$.}
          \label{fig:auxiliary2}
\end{figure}
\begin{eqnarray}
\label{eqn:L'}
\begin{split}
	L' = \left( \frac{-2m_1}{|m_1|}, m_{11}, m_{12}, \dots, m_{1\ell_1} , \frac{-2m_2}{|m_2|}, m_{21},\dots, m_{2\ell_2}, \dots  \right. \\
		\left. \dots , \frac{-2m_R}{|m_R|}, m_{R1},\dots, m_{R\ell_R} \right) ,
\end{split}
\end{eqnarray}
where the term $\frac{-2m_i}{|m_i|}$ is omitted if $|m_i|=1$. The link $L'$ is oriented so that the surface obtained by applying the Seifert algorithm is of Type 1. See Figure~\ref{fig:auxiliary2}. The auxiliary link $L'$ is derived from a procedure of Gabai in which a minimal genus Seifert surface is desummed and its sutured manifold hierarchy is analyzed to determine whether $L$ fibers~\cite{Gabai:Detecting}.
\begin{theorem}[Gabai, Theorem 6.7 in \cite{Gabai:Detecting}] 
\label{thm:fiberedpretzels}
The algorithm which follows determines whether an oriented pretzel link fibers.\footnote{The original formulation describes the fiber surfaces for all types; we include this information only when it is relevant to our calculations.}

{\bf Algorithm.} A pretzel link $L$ is one of three types.

{\bf Type 1:} Then $L$ fibers if and only if one of the following holds:
\begin{enumerate}
	\item each $n_i=\pm1$ or $\mp3$ and some $n_i=\pm1$.
	\item $(n_1, \dots, n_r) = \pm(2,-2,2, -2,\dots, 2,-2,n)$, $n\in\Z$ (here, $r$ is odd).
	\item $(n_1, \dots, n_r) = \pm(2,-2,2, -2,\dots, -2,2,-4)$ (here, $r$ is even).
\end{enumerate}

{\bf Type 2:} Fibered Type 2 links fall into the following three subcases.
\begin{itemize}
	\item[] {\bf Type 2A:} The numbers of positive and negative $m_i$ differ by two. Then $L$ fibers if and only if $|m_{ij}| = 2$ for all indices $ij$.
	\item[] {\bf Type 2B:} The numbers of positive and negative $m_i$ in $L$ are equal and $L'\neq$ $\pm(2,-2,\dots,2,-2)$. Then $L$ fibers if and only if $L'$ fibers.
\end{itemize}
{\bf Type 3:} If either the numbers of positive and negative tangles are unequal or if $L'\neq \pm(2,-2,\dots, 2, -2)$, then treat $L$ as if it was Type 2A or 2B. Otherwise, $L$ is fibered if and only if there is a unique $m_i$ of minimal absolute value.  

Finally, if $L$ is a fibered pretzel link of Type 1, Type 2A, or the Type 2A subcase of Type 3, then the fiber surface is necessarily the surface obtained by applying the Seifert algorithm to the pretzel diagram of $L$.  
\end{theorem}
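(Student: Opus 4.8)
The plan is to prove the classification via Gabai's theory of sutured manifolds, since the excerpt's own derivation of the auxiliary link $L'$ already signals this machinery. The guiding principle is Gabai's fiberedness criterion: an oriented Seifert surface $R$ for a link $L$ is a fiber surface precisely when the complementary sutured manifold $(S^3 \setminus \mathrm{int}\, N(R), \gamma)$ is a \emph{product} sutured manifold $R \times [0,1]$. To certify this I would exhibit a \emph{sutured manifold hierarchy}, a finite sequence of decompositions along product disks and product annuli terminating in a disjoint union of balls; to obstruct it I would show no such hierarchy exists. First I would fix the candidate fiber as the surface produced by Seifert's algorithm on the oriented diagram and record how its local structure is dictated by the orientation type of each tangle: a consistently oriented $m_i$ and an inconsistently oriented $m_{ij}$ contribute qualitatively different bands and disks to $R$. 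This is exactly what forces the split into Types 1, 2, and 3.

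The key reduction is Gabai's Murasugi sum (plumbing) theorem: if $R = R_1 *_D R_2$, then $R$ is a fiber surface if and only if both $R_1$ and $R_2$ are. I would use this to strip off plumbing summands. In the presence of consistently oriented tangles, this is precisely the \emph{desumming} that replaces each $m_i$ by the tangle $-2 m_i / |m_i|$ while leaving the $m_{ij}$ untouched, producing the auxiliary Type 1 link $L'$ of Equation~\ref{eqn:L'}; the plumbing theorem then reduces the fiberedness of a Type 2B link (and the corresponding Type 3 link) to that of $L'$, which explains the dichotomy in the statement. The count of positive versus negative $m_i$ governs whether this desumming can be carried out cleanly or leaves a residual obstruction, and this is the source of the Type 2A condition $|m_{ij}| = 2$ and of the unequal/equal split inside Type 3.

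It then remains to settle the base case of Type 1 links directly. Here every tangle is inconsistently oriented, the Seifert surface is an iterated plumbing of twisted bands, and I would run explicit disk decompositions, checking exactly when the hierarchy closes up to a product. This is where the arithmetic conditions (1)--(3) should emerge: generic twisting obstructs the product structure, whereas the special families $\pm(2,-2,\dots,2,-2,n)$ and $\pm(2,-2,\dots,-2,2,-4)$ are the exceptional configurations admitting a nonstandard decomposition that still succeeds. The concluding sentence of the theorem falls out of the construction itself: whenever the reduction never passes through $L'$ (Types 1, 2A, and the 2A subcase of Type 3), the product structure is realized on the Seifert-algorithm surface we started with, so that surface is the fiber.

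I expect the main obstacle to be the Type 1 and Type 3 endgame rather than the generic reductions. The generic cases are handled uniformly by plumbing and desumming, but the exceptional families and the Type 3 ``unique $m_i$ of minimal absolute value'' criterion resist any uniform argument: each requires a bespoke sutured manifold decomposition together with a careful verification that \emph{no} product disk decomposition exists outside the listed cases. Managing this bookkeeping, tracking the sutures through every decomposition and ruling out all competing hierarchies, is the genuinely delicate part, and is precisely where the depth of Gabai's original analysis in \cite{Gabai:Detecting} lies.
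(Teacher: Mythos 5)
First, a point of context: the paper does not prove this statement at all --- it is Gabai's Theorem~6.7, imported verbatim from \cite{Gabai:Detecting} with a citation, so there is no internal proof to compare against. What you have written is a plan for reproving Gabai's theorem, and to your credit the machinery you name is the correct one: Gabai's criterion that $R$ is a fiber surface iff the complementary sutured manifold is a product, the use of sutured manifold hierarchies and product disk decompositions, and the Murasugi sum theorem to desum the consistently oriented tangles $m_i$ into $-2m_i/|m_i|$, which is exactly the origin of the auxiliary link $L'$ in Equation~\ref{eqn:L'}. The overall architecture --- reduce Types 2B and 3 to the Type 1 link $L'$ by desumming, handle Type 1 as the base case by explicit decompositions --- matches Gabai's actual argument.

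However, as a proof this has a genuine and essentially total gap: none of the specific conclusions of the theorem are derived. Every arithmetic condition that constitutes the content of the statement --- that Type 1 links fiber exactly under (1), (2), or (3); that Type 2A links fiber iff every $|m_{ij}|=2$; that the residual Type 3 case fibers iff there is a unique $m_i$ of minimal absolute value --- is deferred to ``explicit disk decompositions'' or a ``bespoke sutured manifold decomposition'' that you acknowledge you have not carried out. You also assert without justification that desumming an $m_i$ tangle leaves the residual tangle $-2m_i/|m_i|$; this requires identifying the Murasugi summand contributed by each consistently oriented tangle (a plumbing of Hopf bands whose fiber surface is known) and verifying the plumbing region, which is a nontrivial step of Gabai's argument. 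Likewise, the negative direction --- showing that \emph{no} product disk decomposition exists outside the listed families, which is where the ``only if'' halves of every clause live --- is named as the hard part but not addressed. In short, you have correctly reconstructed the table of contents of Gabai's proof, but the proposal contains no argument that would let a reader verify any single clause of the theorem; as submitted it is a citation of the method rather than a proof.
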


In our case analysis, we denote the three subcases of Type 3 by Type 3-2A, Type 3-2B, and Type 3-min accordingly.

\begin{remark}
In Gabai's classification of oriented fibered pretzel links, there is a third subcase of fibered Type 2 links, called Type 2C. For these links, the numbers of positive and negative $m_i$ are equal and $L'=\pm(2, -2, \dots,2,-2)$. However, these links are not minimally presented and can be isotoped to be in Type 3. 
\end{remark}

\begin{remark}
If a pretzel knot $K$ (as opposed to a link) is Type 1, there is an odd number of $m_{ij}$, all of which are odd. If $K$ is Type 2, there is exactly one $m_{ij}$, which we denote by $\bar{m}$, and this unique $\bar{m}$ must also be the unique even tangle. Moreover, there is an even number of $m_i$. If $K$ is Type 3, there is an even number of $m_i$, exactly one of which is even.
\end{remark}

\subsection{A state sum for the Alexander polynomial}
\label{sec:statesum}
The Alexander polynomial of $K$ admits a state sum expression in terms of the set of Kauffman states $\mathcal{S}$ of a decorated projection of the knot~\cite{Kauffman:Formal}. We will use a reformulation of the Kauffman state sum which appears in~\cite{OS:Alternating}. By a decorated knot projection we mean a knot projection with a distinguished edge. When using decorated knot projections, we will always choose the bottom-most edge in a standard projection of a pretzel knot to be the distinguished edge. Each state $\x$ is equipped with a bigrading $(A(\x), M(\x))\in \Z\oplus \Z$ such that the symmetrized Alexander polynomial of $K$ is given by the state sum
\begin{equation}
\label{eqn:euler}
	\Delta_K(t) = \sum_{\x\in\mathcal{S}} (-1)^{M(\x)}t^{A(\x)}.
\end{equation}
Let $G_B$ and $G_W$ denote the black and white graphs associated with a checkerboard coloring of a decorated knot projection. The decorated edge of $K$ determines a decorated vertex, the \emph{root}, in each of $G_B$ and $G_W$. For a pretzel diagram, there is also clearly a top-most vertex of $G_B$, referred to as the \emph{top vertex}. The set of states $\mathcal{S}$ is in a one-to-one correspondence with the set of maximal trees of $G_B$. Each maximal tree $T\subset G_B$ uniquely determines a maximal tree $T^*\subset G_W$. Fix a state $\x\in\mathcal{S}$ and let $\tree_\x=T_\x \cup T^*_\x$ denote the black and white maximal trees which correspond to $\x$. By an abuse of notation, we will not always distinguish between the state $\x$ and the trees $\tree_\x$. We now describe $A(\x)$ and $M(\x)$ in this framework, following~\cite{OS:Alternating}.  
\begin{figure}[h]
	\labellist
	\small\hair 2pt
	\pinlabel $e^*$ at 82 100
	\pinlabel $e$ at 70 125
	\pinlabel (above)\;\;$\eta(e)=-1,$ at 30 75
	\pinlabel $\eta(e^*)=0$ at 82 75
	\pinlabel (above)\;\;$\eta(e)=+1,$ at 140 75
	\pinlabel $\eta(e^*)=0$ at 192 75
	\pinlabel (below)\;\;$\eta(e)=0,$ at 30 55
	\pinlabel $\eta(e^*)=+1$ at 82 55
	\pinlabel (below)\;\;$\eta(e)=0,$ at 140 55
	\pinlabel $\eta(e^*)=-1$ at 192 55
	\endlabellist
	\centering
	\includegraphics[height=65mm]{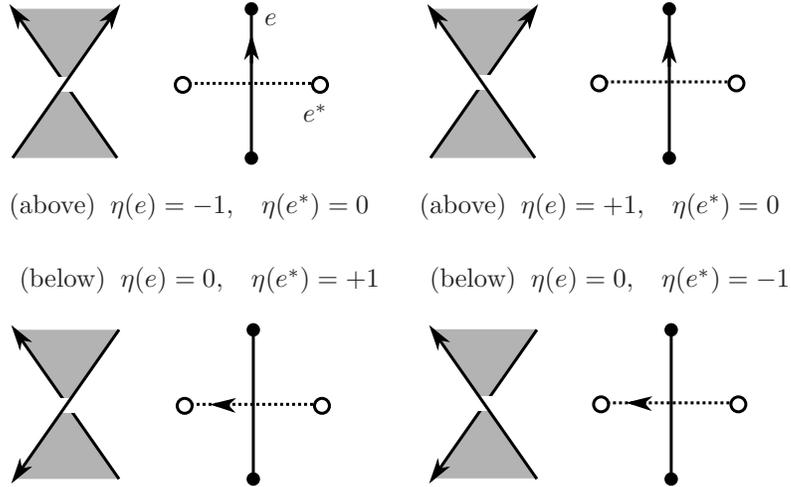}
 	\caption{The labels $\eta(e)$ and $\eta(e^*)$ for the edges $e\in G_B$ and $e^*\in G_W$. The edge orientations pictured are those induced by $K$ on $G_B$ or $G_W$.}
    	\label{fig:labels}
\end{figure}
Label each edge $e$ of $G_B$ and $G_W$ with $\eta(e)\in\{-1,0,1\}$ according to Figure~\ref{fig:labels}. We describe two partial orientations on the edges of $T_\x$ and $T^*_\x$. The first orientation is a total orientation which flows away from the root. The second partial orientation is induced by the orientation on the knot as in Figure~\ref{fig:labels}; note that at each crossing exactly one of the edges of $T_\x$ or $T^*_\x$ is oriented. Then, $A(\x)$ is defined by
\begin{equation}
\label{eqn:Agrading}
	A(\x) = \frac{1}{2} \sum_{e \in \tree_\x} \sigma(e) \eta(e), 
\end{equation}
where
	\begin{equation*}
	  	\sigma(e) = \left\{
	    	\begin{array}{rl}
		0 & \text{if $e$ is not oriented by $K$} \\
		+1 & \text{if the two induced orientations on $e$ agree} \\
		-1 & \text{if the two induced orientations on $e$ disagree.} 
	    	\end{array} \right.
	\end{equation*}
Note that though it is not indicated in the notation, $\sigma(e)$ depends on $\x$, and which $\x$ will be clear from the context; $\eta(e)$ does not depend on $\x$. Next, $M(\x)$ is defined by summing only over edges on which the two orientations agree,
\begin{equation}
\label{eqn:Mgrading}
	M(\x) = \sum_{	
		\begin{tiny}
		\begin{array}{c}
			e \in \tree_\x \\
			\sigma(e)=+1
		\end{array}
		\end{tiny}
	} \eta(e).
\end{equation}
An example of a state and its bigrading is given in Figure~\ref{fig:comboexample}. \begin{figure}
        \begin{subfigure}[b]{.99\textwidth}
		\labellist
		\tiny\hair 1pt
		\endlabellist
   		\centering
                	\includegraphics[height=40mm]{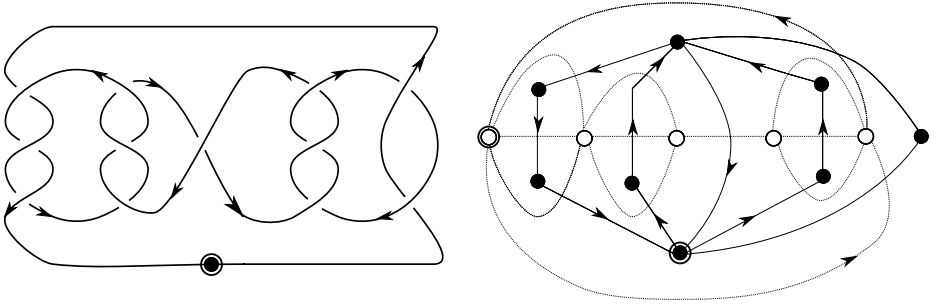}
                \caption{The Type 2A knot $K=(3,-3,1,3,2)$ and the corresponding graphs $G_B$ and $G_W$ with orientations induced by $K$ and black and white roots indicated. 
                }
                \label{fig:comboexample3}
        \end{subfigure}\\
           
        \begin{subfigure}[b]{.99\textwidth}
        		\labellist
		\footnotesize\hair 1pt
		\pinlabel $+1$ at 315 195
		\pinlabel $-1$ at 95 180
		\pinlabel $-1$ at 35 95
		\pinlabel $0$ at 35 30
		\pinlabel $+1$ at 165 160
		\pinlabel $+1$ at 145 120
		\pinlabel $0$ at 135 30
		\pinlabel $0$ at 195 90
		\pinlabel $-1$ at 260 40
		\pinlabel $-1$ at 305 125
		\pinlabel $-1$ at 235 185				
		\pinlabel $0$ at 195 90		
		\pinlabel $0$ at 345 55
		\endlabellist
		\centering
                	\includegraphics[height=40mm]{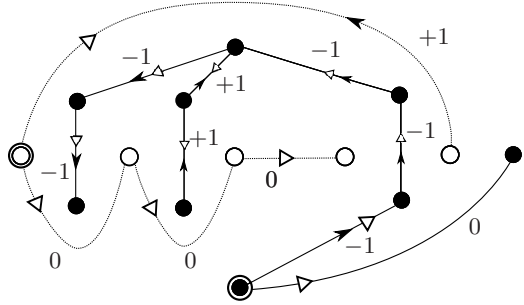}
                \caption{A state $\x$ of $K$ in bigrading $(A(\x), M(\x)) = (-4, -5).$ White arrows indicate the orientations which point away from roots and black arrows indicate the orientations induced by $K$. Edges are labeled by $\eta$.}
                \label{fig:comboexample4}
        \end{subfigure}    
        \caption{An example to illustrate $G_B$ and $G_W$ for the pretzel knot $(3,-3,1,3,2)$ and the bigrading corresponding to a state.} \label{fig:comboexample}
\end{figure}
\subsection{Counting lemmas}
\label{subsec:countinglemmas}
The state-sum formula (Equation~\ref{eqn:euler} above) provides an elementary way to determine the coefficients of the Alexander polynomial. Suppose that the state-sum decomposition of a diagram of a fibered knot $K$ admits a unique state $\xmin$ with minimal $A$-grading $A(\xmin)$. Since the symmetrized Alexander polynomial is monic of degree $g(K)$, then $A(\xmin)=-g(K)$ by Equation~\ref{eqn:euler}. When such a unique minimal element $\xmin$ exists, it is convenient to use $\xmin$ to count the states in $A$-grading $-g(K)+1$. We will often exploit this to show that $|a_{-g(K)+1}|>1$, demonstrating that many pretzel knots are not $L$-space knots.  

\begin{definition}
\label{def:trunk} 
Let $K$ be a pretzel knot with a decorated diagram and let $\tree_\x$ be the trees corresponding to some state $\x$. The \emph{trunk} of $T_\x$ (or just $\x$) is the unique path in $T_\x$ which connects the root of $G_B$ to the top vertex of $G_B$ (see Figure~\ref{fig:comboexample4}). 
\end{definition}
Each tangle $n_i$ determines a path in $G_B$ from the root to the top vertex; let $T(n_i)$ denote this path. We collect the following facts to use freely throughout without reference. 
\begin{fact}Let $\x$ be any state and let $\xmin$ be the unique minimally $A$-graded state, if it exists.
\begin{enumerate}
	\item The trunk of $T_\x$ is necessarily $T(n_k)$ for some $k$. If $i\neq k$, $T(n_i)\cap T_\x\neq T(n_i)$. 
	\item If $|n_i|=1$ and $T(n_i)$ is not the trunk of $T_{\x}$, then $T(n_i)\cap T_{\x}=\emptyset$.
	\item For any $i$, $\eta$ is constant along the edges in $T(n_i)$.
	\item When $T(m_i)$ is not the trunk of the unique minimally $A$-graded state $\xmin$, there is only one terminal edge in $T_{\tilde{x}}\cap T(m_i)$. In particular, $T(m_j)\cap T_{\xmin}$ is connected and cannot have edges incident to both the top vertex and the root. 
\end{enumerate}
\end{fact}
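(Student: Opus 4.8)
The plan is to reduce everything to the combinatorics of spanning trees of the black graph $G_B$, whose shape for a standard pretzel diagram is the crux. With the bottom-most edge decorated, the text already records that each tangle $n_i$ determines a path $T(n_i)$ in $G_B$ from the root to the top vertex; the additional structural fact I would establish is that these paths are internally disjoint, meeting only at the root and the top vertex. This is because the interior vertices of $T(n_i)$ are the bigon regions of the $i$th twist region, which belong to that tangle alone. Thus $G_B$ is a ``generalized theta graph'': $r$ internally disjoint arcs joining the root to the top vertex, with $T(n_i)$ a single edge precisely when $|n_i|=1$. I would fix this picture once and use it for all four assertions.

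Parts (1)--(3) then follow quickly. Since $T_\x$ is a spanning tree, it contains a unique root-to-top path, its trunk; but in the generalized theta graph the only simple root-to-top paths are the $T(n_k)$ themselves, so the trunk is some $T(n_k)$. If $T_\x$ also contained all of $T(n_i)$ for $i\neq k$, the two distinct root-to-top paths $T(n_i)$ and $T(n_k)$ would bound a cycle inside a tree, a contradiction; hence $T(n_i)\cap T_\x\neq T(n_i)$, which is (1). For (2), if $|n_i|=1$ then $T(n_i)$ is a single root-to-top edge, so its presence in $T_\x$ would make it the trunk; as it is not the trunk it is omitted entirely, giving $T(n_i)\cap T_\x=\emptyset$. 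Part (3) is local: all crossings of the tangle $n_i$ share a sign and the two strands keep a fixed relative orientation throughout, so the label $\eta$, which depends only on this crossing data (Figure~\ref{fig:labels}), is constant along $T(n_i)$.

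The real content is (4), and this is where minimality of $\xmin$ must be used. I would first observe that spanning forces $T_{\xmin}$ to omit exactly one edge of each non-trunk path $T(m_i)$: omitting none creates a cycle with the trunk, while omitting two or more disconnects an interior bigon vertex from the tree. Writing $T(m_i)=e_1,\dots,e_\ell$ from the root end and placing the gap at $e_j$, the orientation flowing away from the root enters the path from the root side and, through the trunk, from the top side, so it reverses across the gap; meanwhile the $K$-induced orientation is constant along the tangle because $m_i$ is consistently oriented. Hence $\sigma$ is $+1$ on one side of the gap and $-1$ on the other, and since $\eta\equiv\eta_i$ by (3), the contribution of $T(m_i)$ to $A(\xmin)$ is a monotone affine function of $j$, namely $\tfrac12\eta_i(2j-1-\ell)$ up to the choice of which side carries $\sigma=+1$. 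A monotone function on $\{1,\dots,\ell\}$ is extremized at an endpoint, so the minimizing gap is a terminal edge; this leaves $T_{\xmin}\cap T(m_i)$ connected and incident to exactly one of the root and the top vertex, which is (4). I expect the main obstacle to be the bookkeeping at crossings where the white edge, not the black edge, is the one oriented by $K$: there the gap's contribution is carried by the dual tree $T^*_\x$ in $G_W$, and one must follow the away-from-root orientation on $G_W$ to verify that the terminal-gap conclusion still holds. Confirming this dual case, and checking that the independent per-tangle minimizations are realized simultaneously by the unique $\xmin$, is the part demanding the most care.
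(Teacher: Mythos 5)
Your proof is correct. The paper offers no argument for this Fact --- it is stated as a list of observations ``to use freely throughout without reference'' --- so there is no authorial proof to compare against; what you have written is precisely the implicit content. Your structural observation that $G_B$ is a generalized theta graph, so that a spanning tree contains one full path $T(n_k)$ as its trunk and omits exactly one edge from every other $T(n_i)$, cleanly yields (1)--(3), and your computation that the contribution of a non-trunk $m_i$-tangle to $2A(\xmin)$ is $\pm\eta_i(2j-1-\ell)$, strictly monotone in the position $j$ of the omitted edge, is the right way to force the gap to a terminal edge in (4); the exchange argument (relocating a single gap would strictly decrease $A$, contradicting minimality of $\xmin$) disposes of the simultaneity issue you worried about. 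One remark: the dual bookkeeping you flag as the main obstacle does not actually arise, because item (4) is asserted only for consistently oriented tangles $m_i$, for which every white edge carries $\eta(e^*)=0$ and hence the white tree contributes nothing to the minimization over gap positions; the inconsistently oriented tangles $m_{ij}$, where the white tree would matter, are deliberately excluded from the statement (and indeed the paper's footnote in Lemma~\ref{lem:type2a-2} acknowledges that the analogous conclusion can fail for $|\bar{m}|>2$).
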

\begin{definition}
\label{def:trades}
Let $K$ be a pretzel knot with a decorated diagram and suppose there exists a unique state $\xmin$ with minimal $A$-grading. Fix a tangle $n_i\neq \pm1$ which does not correspond to the trunk. A \emph{trade} is a state $\y$ (or $\tree_\y$) whose corresponding black tree is obtained by replacing the terminal edge of $T_{\xmin}$ contained in $T(n_i)$ with the unique edge in $T(n_i) \smallsetminus T_{\xmin}$. See Figure~\ref{fig:tradenottrade}. 
\end{definition}
In a trade, $T_{\y}$ (resp. $T^*_{\y}$) along with its orientations and labels differs from $T_{\xmin}$ (resp. $T^*_{\xmin}$) in exactly one edge, and furthermore, $T_{\y}$ and $T_{\xmin}$ share the same trunk. 
\begin{lemma}
\label{lem:trades}
Suppose that $K=(n_1,\dots, n_r)$ and $\xmin$ are as in Definition~\ref{def:trades} and that $T(n_k)$ is the trunk of $\xmin$.  Let $\ell$ be the number of tangles with $n_i = \pm 1$ and $i\neq k$. Then, there are $r-\ell-1$ trades, all of which are supported in bigrading $(A(\xmin)+1,M(\xmin) +1)$. 
\end{lemma}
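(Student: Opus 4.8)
I would prove the two assertions in turn: the count $r-\ell-1$, which is bookkeeping, and the bigrading statement, which carries the real content. For the count, recall that a trade is defined for each tangle $n_i$ with $n_i\neq\pm1$ and $i\neq k$. By Fact~(4), for such a tangle $T(n_i)\cap T_{\xmin}$ is connected and omits exactly one edge of $T(n_i)$, so there is a unique terminal edge of $T_{\xmin}$ lying in $T(n_i)$ and a unique edge in $T(n_i)\smallsetminus T_{\xmin}$; hence the trade on $n_i$ exists and is unique. Among the $r$ tangles we discard the trunk $n_k$ and the $\ell$ tangles with $n_i=\pm1$, $i\neq k$, leaving exactly $r-\ell-1$ eligible tangles and therefore $r-\ell-1$ distinct trades.

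For the bigrading, the first step is to show that the passage from $\xmin$ to a trade $\y$ is \emph{local}. Writing $e_0$ for the terminal edge removed and $e_1$ for the unique edge of $T(n_i)\smallsetminus T_{\xmin}$ added, the black trees satisfy $T_\y=(T_{\xmin}\smallsetminus e_0)\cup e_1$, and dually the white trees satisfy $T^*_\y=(T^*_{\xmin}\smallsetminus e_1^*)\cup e_0^*$. Here $e_0$ is a leaf edge of $T_{\xmin}\cap T(n_i)$, so removing it merely isolates that leaf, and adding $e_1$ reattaches the single leaf vertex through the omitted end of $T(n_i)$ without disturbing the trunk or the other tangles. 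Using the precise structure of $T_{\xmin}\cap T(n_i)$ from Fact~(4) (the omitted edge sits at an end of the path), one checks that no other edge reverses its flow-away-from-the-root orientation and no other edge changes tree membership; the dual statement holds in $G_W$. Consequently every edge except $e_0,e_1,e_0^*,e_1^*$ contributes identically to the sums defining $A$ and $M$ for $\xmin$ and for $\y$, and cancels in the differences.

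This reduces each grading change to a four-edge computation. By Fact~(3), $\eta$ is constant along $T(n_i)$, and at each crossing of the tangle exactly one of the black edge or its white dual is oriented by $K$ (Figure~\ref{fig:labels}); so either $e_0,e_1$ carry $\eta_i=\pm1$ with $e_0^*,e_1^*$ carrying $\eta=0$, or the reverse. In the first case the white duals drop out and
\[
A(\y)-A(\xmin)=\tfrac12\bigl(\sigma_\y(e_1)-\sigma_{\xmin}(e_0)\bigr)\eta_i,
\]
with the analogous identity (in $e_1^*,e_0^*$) in the second case. Since each $\sigma\in\{-1,0,1\}$, we get $|A(\y)-A(\xmin)|\le1$. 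On the other hand $\y\neq\xmin$ and $\xmin$ is the \emph{unique} minimal state, so $A(\y)>A(\xmin)$; as the $A$-grading is integer valued this forces $A(\y)=A(\xmin)+1$. Substituting $A(\y)-A(\xmin)=1$ back into the displayed formula pins the surviving $\sigma$-values to the extremes $\sigma_\y(e_1)\eta_i=+1$ and $\sigma_{\xmin}(e_0)\eta_i=-1$ (and likewise in the white case); feeding these into Equation~\ref{eqn:Mgrading} then gives $M(\y)-M(\xmin)=+1$, a short check that holds for either sign of $\eta_i$.

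\textbf{Main obstacle.} I expect the crux to be the locality claim of the second paragraph: verifying that the single edge swap in $G_B$, together with its dual swap in $G_W$, leaves the flow-away-from-the-root orientation — and hence every $\sigma(e)$ — unchanged on all other edges. This is exactly where Fact~(4) is essential, since it guarantees the omitted edge is terminal and the reattached leaf cannot reverse any edge along the trunk or the remaining tangles. Once locality is established, the remaining bound-and-minimality argument pins both $A(\y)-A(\xmin)$ and $M(\y)-M(\xmin)$ to $+1$ with no case analysis beyond the black-active/white-active dichotomy.
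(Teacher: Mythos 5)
Your proof is correct and follows essentially the same route as the paper's: the same count of $r-\ell-1$ trades, the same locality observation, the same black-active/white-active dichotomy on which of $e,e^*$ carries $\eta=\pm1$, and the same use of minimality and uniqueness of $\xmin$ to force $\sigma_{\xmin}(e_0)\eta_i=-1$. The only cosmetic difference is at the last step: the paper gets $\sigma_{\y}(e_1)=-\sigma_{\xmin}(e_0)$ directly from the flip of the away-from-root orientation on the swapped edge, whereas you reach the same conclusion via the bound $|A(\y)-A(\xmin)|\le 1$ plus integrality of the Alexander grading of states of a knot; both are valid.
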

\begin{figure}[htb]
	\labellist
	\small\hair 2pt
	\endlabellist
	\centering
	\includegraphics[height=27mm]{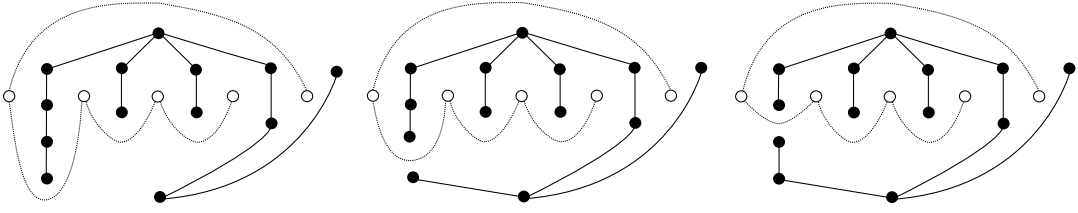}
 	\caption{Three states for the pretzel knot $(5,-3,3,3,2)$. If the knot is oriented so that the strands of the first tangle point downward, the first state is the unique state with minimal $A$-grading, the middle state is a trade and the last state is neither.}
    	\label{fig:tradenottrade}
\end{figure}
\begin{proof}
Let $\y$ be a trade. By definition, there is exactly one trade corresponding with each tangle of length greater than one which is not the trunk (see Figure~\ref{fig:tradenottrade}), and so there are $r - \ell -1$ trades. Let $e_{\tilde{\x}} \in T_{\xmin}$ and $e_\y \in T_\y$ ($e^*_{\tilde{\x}}$ and $e^*_\y$ respectively) be the edges along which $T_{\xmin}$ and $T_\y$ ($T^*_{\xmin}$ and $T^*_\y$ respectively) differ. The edges $e_{\tilde{\x}}$ and $e_\y$ are contained in some $T(n_i), i\neq k$, and therefore share the same value for $\eta$. Assume first that $\eta(e_{\tilde{\x}})=\eta(e_\y)=\pm1$ and $\eta(e^*_{\tilde{\x}})=\eta(e^*_\y)=0$. Because $A(\xmin)$ is minimal and $\xmin$ is unique, $\sigma(e_{\tilde{\x}})\eta(e_{\tilde{\x}})=-1$, or else $A(\y) \leq A(\xmin)$.  This implies $\sigma(e_{\tilde{\x}})=-\eta(e_{\tilde{\x}}).$ In the trade, $e_{\xmin}$ is replaced with $e_\y$ and the orientations induced by the root on $T_{\xmin}$ and $T_\y$ switch from pointing down on $e_{\xmin}$ to pointing up on $e_\y$ (or vice versa). Hence $\sigma(e_\y)= -\sigma(e_{\tilde{\x}})$. This implies $\sigma(e_\y)\eta(e_\y)=+1$, and therefore both $M(\y)=M(\xmin)+1$ and $A(\y) = A(\xmin) + 1$. Assume next that $\eta(e^*_{\tilde{\x}})=\eta(e^*_\y)=\pm1$ and $\eta(e)=\eta(e_\y)=0$. The trade induces a change in $T^*_{\xmin}$ wherein the edge $e^*_{\xmin}$ is replaced with an edge $e^*_\y$ which is vertically adjacent in $G_W$ (see Figure~\ref{fig:tradenottrade}). Similarly, since $A(\xmin)$ is minimal $\sigma(e^*_{\tilde{\x}})\eta(e^*_{\tilde{\x}})=-1$. The same argument as for $G_B$ applies and we obtain $M(\y)=M(\xmin)+1$ and $A(\y) = A(\xmin) + 1$. 
\end{proof}

For the remainder of the paper, we proceed through the cases of Theorem~\ref{thm:fiberedpretzels} to prove Theorem~\ref{thm:pretzels}. In all cases (exempting the two families of knots mentioned in Theorem~\ref{thm:pretzels}), for each fibered knot $K$ we will exhibit an Alexander grading $s$ where $\hfk(K,s)$ is neither trivial nor isomorphic to $\mathbb{Z}$.  As discussed, this implies these knots are not $L$-space knots. For most fibered pretzel knots, we will do this by showing that there is a coefficient of the Alexander polynomial with $|a_s|>1$. Except for a few sporadic knots, we accomplish this by making repeated use of two basic arguments: either studying $a_{-g(K)+1}$ with the state-sum formula or by analyzing the determinant of $K$ and applying Lemma~\ref{lem:det}. In fact the Alexander polynomial serves as an obstruction for all but one knot. We will show:
\begin{observation}
Up to mirroring, there is a unique fibered pretzel knot which has the Alexander polynomial of an $L$-space knot which does not admit an $L$-space surgery. This knot is $(3,-5,3,-2)$.
\end{observation}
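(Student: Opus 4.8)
The plan is to read this Observation as a summary of the entire case-by-case proof of Theorem~\ref{thm:pretzels}, together with one dedicated computation at the end. First I would run through the six cases produced by Gabai's classification (Type 1, Type 2A, Type 2B, and the three subcases Type 3-2A, 3-2B, 3-min of Theorem~\ref{thm:fiberedpretzels}) and, in each, show that every fibered pretzel knot other than the families $\pm(-2,3,q)$ and $T(2,2n+1)$ fails to have the Alexander polynomial of an $L$-space knot. The two engines are already in place: Lemma~\ref{lem:det}, which yields some $|a_s|>1$ as soon as $det(K)>2g(K)+1$, and Lemma~\ref{lem:trades}, which produces $r-\ell-1$ states all in bigrading $(A(\xmin)+1,M(\xmin)+1)$. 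Since these trades share a Maslov grading they contribute to $a_{-g(K)+1}$ with a common sign, so $|a_{-g(K)+1}|\ge 2$ whenever $r-\ell-1\ge 2$ and no other state lands in Alexander grading $-g(K)+1$. The general recipe is thus: apply Lemma~\ref{lem:det} when the determinant is large, and otherwise locate the unique minimal state $\xmin$ and count trades.

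This reduces the problem to a finite list of sporadic knots, namely those with small determinant and at most one non-unit tangle off the trunk, where neither engine immediately forces a repeated coefficient. For each of these I would compute $\Delta_K(t)$ directly from the state sum (Equation~\ref{eqn:euler}) and test the defining shape of an $L$-space Alexander polynomial: monic, all coefficients in $\{-1,0,1\}$, with the nonzero coefficients alternating in sign. Almost all sporadic knots are eliminated here because some coefficient has absolute value exceeding one or the alternation or monic condition fails. The survivors are precisely the $L$-space families together with $(3,-5,3,-2)$ and its mirror.

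The crux, and the only place where $\Delta_K(t)$ is insufficient, is to show that $(3,-5,3,-2)$ is genuinely not an $L$-space knot despite having an $L$-space Alexander polynomial. Since the Euler-characteristic count is now blind, I would compute the bigraded knot Floer homology itself from the Kauffman state sum, recording both $A(\x)$ and $M(\x)$ for every state, and exhibit a single Alexander grading $s$ in which the surviving homology has rank at least two. This contradicts the fact, recalled in the introduction, that an $L$-space knot satisfies $\hfk(K,s)\cong 0$ or $\Z$ for every $s$. The clean way to force survival from state-level data is to find an $s$ whose states occupy Maslov gradings that are isolated from one another: either two states in a common Maslov grading $m$ with no states in $(s,m\pm1)$, or states in Maslov gradings differing by at least two with nothing between them. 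Because the knot Floer differential has Maslov degree $-1$ and preserves $A$, such generators cannot cancel, so they persist to homology and give $\rank\,\hfk(K,s)\ge 2$.

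I expect the main obstacle to be exactly this last step, since the state sum computes only the graded Euler characteristic and ruling out cancellation is where the real content lies. Concretely, the bigradings of the states of $(3,-5,3,-2)$ must be arranged so that the isolation condition actually holds in some Alexander grading; equivalently, one compares the total rank of the state complex against the rank $\sum_s|a_s|$ that an $L$-space knot with this Alexander polynomial would be forced to carry, and shows the former is strictly larger. Pinpointing the grading where the extra generators sit and confirming that they are Maslov-isolated is the delicate part of the argument.
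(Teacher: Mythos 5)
Your overall architecture matches the paper's: the Observation is indeed proved by the full case analysis of Theorem~\ref{thm:fiberedpretzels}'s types, driven by the two engines you name (the determinant bound of Lemma~\ref{lem:det} and the trade count of Lemma~\ref{lem:trades}, supplemented by a careful Maslov-grading comparison of the non-trade states $\x_j$ in grading $-g(K)+1$), with a handful of sporadic Alexander polynomial computations at the end. The divergence, and the problem, is at the crux you correctly identify: showing that $(3,-5,3,-2)$ is not an $L$-space knot even though $\Delta_K(t)=t^{-3}-t^{-2}+1-t^2+t^3$ passes every Alexander-polynomial test. The paper does \emph{not} argue from the Kauffman state data here; it simply computes $\hfk(K;\F_2)$ with Droz's program and reads off from Table~\ref{table:hfkpretzel} that $\dim\hfk(K,s;\F_2)\geq 2$ in several Alexander gradings.

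Your proposed substitute --- locate an Alexander grading in which the Kauffman states are Maslov-isolated so they must survive to homology --- is a legitimate technique in principle (the states do generate a model of $\cfk$ with differential preserving $A$ and dropping $M$ by one), but you have given no reason to believe the isolation hypothesis holds for this knot, and the computed answer suggests it does not: $\hfk(3,-5,3,-2)$ is thin, supported in exactly two \emph{adjacent} Maslov gradings in each Alexander grading, with $|\chi|\leq 1$ in each. If the states are likewise concentrated in adjacent Maslov gradings (and nothing you have said rules this out), then no isolation is available and the state-level bigradings certify nothing beyond $|a_s|\leq 1$. Your fallback --- comparing the total rank of the state complex to $\sum_s|a_s|$ --- is not a valid argument at all: excess generators can cancel in pairs under the (holomorphic, not combinatorial) differential, so a large chain complex does not force large homology. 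As written, the final and essential step of your proof is a gap; to close it you would need either an actual computation of the homology (as the paper does) or a verified isolation pattern among the states, which you have not exhibited.
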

Before proceeding, we point out that pretzel knots with one strand are unknotted and that the two stranded pretzel $(a,b) \simeq T(2,a+b)$. In all of the cases which follow, $K$ is a minimally presented fibered pretzel knot with three or more tangles, unless otherwise stated.
\section{Type 1 Knots}
\label{sec:type1}
We will only need Lemma~\ref{lem:det} to determine which Type 1 pretzel knots are $L$-space knots.
\begin{lemma}
\label{lem:det-rs}
The only $L$-space pretzel knots of Type 1 are those isotopic to the $T(2, 2n+1)$ torus knots. Any other fibered pretzel knot $K$ of Type 1 satisfies $\det (K) > 2g(K)+1$.
\end{lemma}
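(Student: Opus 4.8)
The plan is to use Gabai's classification (Theorem~\ref{thm:fiberedpretzels}) to enumerate the fibered Type 1 pretzel knots, compute the genus and determinant of each family via the state-sum formula and Equation~\ref{eqn:det}, and then invoke Lemma~\ref{lem:det} to conclude that the determinant strictly exceeds $2g(K)+1$ in every case except the torus knots $T(2,2n+1)$. Recall from the Type 1 criterion that a Type 1 link fibers if and only if (i) each $n_i = \pm 1$ or $\mp 3$ with some $n_i = \pm 1$, or (ii) $L = \pm(2,-2,\dots,2,-2,n)$, or (iii) $L = \pm(2,-2,\dots,-2,2,-4)$. Moreover, by the first remark following Theorem~\ref{thm:fiberedpretzels}, a Type 1 \emph{knot} has an odd number of $m_{ij}$, all odd, which will immediately constrain which of these three families actually produce knots.

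First I would dispose of families (ii) and (iii): since these involve tangles equal to $\pm 2$ (even), they contain even $m_{ij}$ tangles, contradicting the parity constraint unless the diagram degenerates. Cases like $(2,-2,\dots,2,-2,n)$ with $n=\pm1$ reduce to $T(2,2n+1)$-type knots after flyping and cancelling $+1/-1$ pairs, so these are exactly the torus knots permitted in the statement; the genuine pretzel cases either fail to be knots or can be ruled out by a direct determinant computation. The main family to handle is therefore (i): knots of the form $(\pm 1, \dots, \pm 1, \mp 3, \dots, \mp 3)$ with at least one tangle equal to $\pm 1$. After using flype moves to cancel opposite unit tangles and to absorb the remaining structure, I would be left with a manageable normal form, say $d$ copies of $+1$ (or $-1$) together with $k$ copies of $\mp 3$, with $|n_i| > 1$ only for the $\mp 3$ tangles.

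For this normal form I would compute the determinant directly from Equation~\ref{eqn:det}, obtaining an expression of the shape $\bigl|(\mp 3)^k \cdot (d + \sum 1/n_i)\bigr|$, which grows multiplicatively in the number of length-three tangles. For the genus, since these knots are fibered, $g(K)$ equals the degree of the monic symmetrized Alexander polynomial, which I would read off using the state sum (Equation~\ref{eqn:euler}) — or more simply, $2g(K)+1$ is bounded by the number of Kauffman states, which grows only linearly in the tangle count. The crux is then the inequality $\det(K) > 2g(K)+1$: the determinant is exponential (roughly $3^k$) while $2g(K)+1$ is linear, so for all but the smallest cases the inequality holds comfortably, and Lemma~\ref{lem:det} finishes the argument.

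The main obstacle I anticipate is not the asymptotics but the boundary cases — the small values of $k$ and $d$ where $3^k$ and the genus are comparable, and where the knot might degenerate, fail to be prime, or actually coincide with a torus knot. These require careful individual bookkeeping: verifying that each small diagram is genuinely a minimally presented prime pretzel knot, confirming the precise genus via the state sum rather than the crude state-count bound, and explicitly checking which sporadic low-complexity knots reduce to $T(2,2n+1)$. Keeping the flype-reduction and the parity constraints from the remark straight, so that no fibered Type 1 knot is overlooked, will be the most delicate part of the write-up.
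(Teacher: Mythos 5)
Your overall route is the paper's: discard subcases (2) and (3) of Type~1, reduce subcase (1) to the normal form $K=(\underbrace{1,\dots,1}_{c},\underbrace{-3,\dots,-3}_{d})$, compute $\det(K)=|3^{d-1}(d-3c)|$ from Equation~\ref{eqn:det}, compare with $2g(K)+1=c+d$, and finish with Lemma~\ref{lem:det} plus a hand-check of the small cases. Two points need repair, though. First, your ``more simply'' bound for the genus is false: the number of Kauffman states of $(1^c,(-3)^d)$ is the number of spanning trees of $G_B$, namely $3^{d}(c+d/3)=3^{d-1}(3c+d)$, which is \emph{exponential} in $d$, not linear, so it cannot be played off against the (also exponential) determinant. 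The correct and cheap source for the genus here is the last sentence of Theorem~\ref{thm:fiberedpretzels}: for Type~1 the fiber is the Seifert-algorithm surface of the pretzel diagram, giving $g(K)=\tfrac12(c+d-1)$ directly (equivalently, the degree of the monic Alexander polynomial, but you do not need the state sum for this). Second, your treatment of subcases (2) and (3) is muddled: these are simply links with at least two components (they have at least two even tangles), so nothing in them ``reduces to $T(2,2n+1)$''; the torus knots in the statement arise only from subcase (1) with $d=0$. With those corrections, and with the boundary cases $d<3$ and $d=3c$ (the latter forces $c+d$ even, hence a link) checked as you anticipate, the argument matches the paper's.
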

\begin{proof}
In our case analysis, we disregard the sub-cases (2) and (3) of Type 1 because these are links with at least two components. Thus up to mirroring, 
\[
	K = (\underbrace{1,\dots, 1}_{c}, \underbrace{-3,\dots, -3}_{d}),
\]
where $c>0$ and $d\geq0$. When $d=0$, $K$ is the torus knot $T(2,c)$. Thus assume $d>0$.  If $K$ has three strands, then $K$ is isotopic to either $(1,-3,-3)$ or $(1,1,-3)$, which are $T(2,3)$ and the figure eight knot, respectively. The figure eight knot has $det(K) = 5>2g(K)+1$. Therefore, we may assume that $K$ has at least four strands (in fact five, since if $K$ is a Type 1 knot, it must have an odd number of strands).  More generally, the genus of the pretzel spanning surface (and in this case, the genus of $K$ by Theorem \ref{thm:fiberedpretzels}) is given by 
\[
	g(K)=\frac{1}{2}(d+c-1).
\]
By Equation~\ref{eqn:det},
\[
	det(K) = | 3^d(-c + \sum_{i=1}^d \frac{1}{3} ) | = |3^{d-1} (d-3c) |.
\]
We will verify the inequality in two cases, $d>3c$ and $d<3c$, where $c, d>0$ and $d+c\geq 5$. (When $d=3c$, $d+c$ is even and so $K$ is not a knot.) If $d>3c$, then
\[
	det(K) =|3^{d-1} (d-3c) | \geq |3^{d-1}|  > \frac{4d}{3} > d+c =2g(K)+1 .
\]
Consider $d<3c$. If $d<3$, the inequality is easily checked by hand. If $3\leq d<3c$, we have
\begin{eqnarray*}
	3^{d-1} -1  > 2d  &\Rightarrow& (3^{d-1}-1)(3c-d) > 2d-2c \\
	&\Rightarrow &(3^{d-1}-1)(3c-d) +(3c-d) > d+c \\
	&\Rightarrow &det(K) = 3^{d-1}(3c-d)  > d+c =2g(K)+1. \qedhere
\end{eqnarray*} 
\end{proof}
\section{Type 2 knots}
\label{sec:type2}
We remind the reader that a Type 2 knot has an odd number of tangles and contains exactly one $m_{ij}$, which is even and denoted $\bar{m}$.
\subsection{Type 2A} 
\label{subsec:type2a}
After mirroring, we may assume that a Type 2A fibered knot has $p+2$ positive odd tangles, $p$ negative odd tangles, and $\bar{m} = \pm2$. The proof of Theorem \ref{thm:pretzels} for Type 2A knots is addressed via Lemmas \ref{lem:type2a-1}, \ref{lem:type2a-2} and \ref{lem:type2a-3}.

\begin{lemma}
\label{lem:type2a-1}
Up to mirroring, the only $L$-space pretzel knots of Type 2A with three tangles are those isotopic to $(-2,3,q)$, for $q\geq1$ odd. Otherwise there exists a coefficient $a_s$ of $\Delta_K(t)$ such that $|a_s|\geq 2$.
\end{lemma}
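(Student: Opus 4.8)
The plan is to reduce the lemma to a two-parameter family $(a,b,\bar{m})$ and then peel off the $L$-space family $(-2,3,q)$ from everything else using the determinant estimate of Lemma~\ref{lem:det}, handling the finitely many leftover knots by hand with the state sum.

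First I would fix the family. For a Type 2A knot with exactly three tangles, the count ``$p+2$ positive odd tangles, $p$ negative odd tangles, and one $\bar{m}$'' forces $p=0$, so after mirroring $K=(a,b,\bar{m})$ where $a,b$ are odd and positive and $\bar{m}=\pm2$ is the unique even, inconsistently oriented tangle. Since the diagram is minimally presented there is no $\pm1$ tangle adjacent to a $\mp2$ tangle; when $\bar{m}=-2$ this forces $a,b\geq 3$, and when $\bar{m}=+2$ the determinant below is large enough that no such restriction is needed.

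Next I would compute the two inputs to Lemma~\ref{lem:det}. Because $K$ is Type~2A, Theorem~\ref{thm:fiberedpretzels} identifies its fiber with the surface produced by Seifert's algorithm, so $g(K)$ can be read off from an Euler characteristic count: the two odd, consistently oriented tangles smooth to pairs of parallel strands, while the length-two even tangle smooths to a pair of turnbacks together with one small circle. This leaves exactly three Seifert circles against $a+b+2$ crossings, whence $g(K)=\frac{1}{2}(a+b)$ and $2g(K)+1=a+b+1$. From Equation~\ref{eqn:det}, $\det(K)=|ab+\bar{m}(a+b)|$. If $\bar{m}=+2$ then $\det(K)=ab+2(a+b)>a+b+1=2g(K)+1$, so Lemma~\ref{lem:det} already produces a coefficient with $|a_s|>1$. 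If $\bar{m}=-2$ then $\det(K)=|ab-2(a+b)|=|(a-2)(b-2)-4|$; when $\min(a,b)=3$ the knot is $(-2,3,q)$ (the asserted $L$-space family, which I do not rule out), and when $\min(a,b)\geq5$ the inequality $\det(K)>2g(K)+1$ is equivalent to $(a-3)(b-3)>10$, which holds in every case except the two sporadic pairs $(a,b)=(5,5)$ and $(5,7)$.

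It remains to treat $(5,5,-2)$ and $(5,7,-2)$, whose determinants are too small for Lemma~\ref{lem:det}, and here I would switch to the state sum. Each has a unique minimally $A$-graded state $\xmin$ and no tangle of length one, so Lemma~\ref{lem:trades} yields exactly two trades, both supported in bigrading $(A(\xmin)+1,M(\xmin)+1)$; if these are the only states in Alexander grading $-g(K)+1$, then $a_{-g(K)+1}=\pm2$ and the knots are excluded. I expect this verification to be the main obstacle: one must rule out a third, oppositely signed state in grading $-g(K)+1$ that would cancel a trade. This is exactly what distinguishes these knots from $(-2,3,q)$, where the length-three tangle contributes such a cancelling state and pins the subleading coefficient at $\pm1$; since neither $(5,5,-2)$ nor $(5,7,-2)$ has a tangle of length three, one expects no cancellation, but this must be checked by enumerating the spanning trees of $G_B$ near $\xmin$. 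Equivalently---and perhaps more cleanly---one may simply compute $\Delta_K$ directly for these two knots and exhibit a coefficient equal to $\pm2$.
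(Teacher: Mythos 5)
Your overall route is the paper's: reduce to $K=(\pm2,a,b)$ with $a,b$ odd and positive, read off $g(K)=\tfrac{1}{2}(a+b)$ from Gabai's identification of the fiber with the Seifert-algorithm surface, apply the determinant bound of Lemma~\ref{lem:det}, and isolate $(5,5,-2)$ and $(5,7,-2)$ as the sporadic cases. The one real difference is the case $\bar{m}=+2$: the paper disposes of $(2,a,b)$ by observing that it is alternating and hyperbolic, hence not an $L$-space knot, whereas you run the determinant inequality $ab+2(a+b)>a+b+1$. Your version is more uniform and actually proves the coefficient statement of the lemma in that case as well, so this substitution is fine (arguably cleaner).

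One concrete warning about your preferred treatment of the two sporadic knots: the cancelling state you worry about really is there. For $(a,b,-2)$ the state whose trunk is the path $T(-2)$ (the state called $\x'$ in the proof of Lemma~\ref{lem:type2a-2}) also lies in $A$-grading $-g(K)+1$, with $M$-grading $M(\xmin)+2$, so it cancels one of the two trades and yields $a_{-g(K)+1}=\pm1$. This is consistent with the displayed polynomials: the subleading coefficients of $\Delta_{(-2,5,5)}$ and $\Delta_{(-2,5,7)}$ are both $-1$, and the obstruction instead lives in the middle coefficients ($|a_{\pm1}|=2$, $|a_0|=3$ for $(-2,5,5)$; $|a_{\pm2}|=2$ for $(-2,5,7)$). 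So the trade count at $-g(K)+1$ cannot finish these two knots, and your fallback---computing $\Delta_K$ directly and exhibiting a coefficient of absolute value at least $2$---is exactly what the paper does and is the step you should commit to.
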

\begin{proof}
Here $K=(\pm2,r,q)$, minimally presented, where $r$ and $q$ are positive, odd integers. For $K=(2,r,q)$, $K$ is alternating and hyperbolic, hence not an $L$-space knot~\cite{OS:LensSpaceSurgeries}. Therefore, we may assume $K=(-2,r,q)$, with $r>1$. When $r= 3$ and $q$ is any positive odd integer, this is the family of $L$-space knots exempted in the assumptions of the lemma.  

Without loss of generality, we may further assume that $5\leq r \leq q$.  The genus of the surface $F$ obtained by applying the Seifert algorithm to the pretzel presentation for $K=(-2,r,q)$ is $g(F)=\frac{1}{2}(r+q)$, which is equal to $g(K)$ by Theorem~\ref{thm:fiberedpretzels}. Thus, whenever $r >5$  and $q>5$ or whenever $r = 5$ and $q>7$, 
\begin{eqnarray*}
det(K) &=& |2rq(\frac{1}{r} + \frac{1}{q} - \frac{1}{2})| = |2(r + q) - rq| > r+q+1 = 2g(K)+1.
\end{eqnarray*}
It remains to check $r=5$ and $q=5$ or $7$. We obtain the desired result by computing the Alexander polynomials\footnote{All Alexander polynomials in this paper are computed using the Mathematica package KnotTheory~\cite{knotatlas}.}:
\begin{eqnarray*}
&\Delta_{P(-2,5,5)}(t) =& t^{-5}-t^{-4}+ t^{-2}-2t^{-1}+ 3-2t+ t^2-t^4+ t^5 \\
&\Delta_{P(-2,5,7)}(t) =& t^{-6}-t^{-5} +t^{-3} -2t^{-2} +3t^{-1}- 3 + 3t - 2t^2 + t^3- t^5 +t^6.
\end{eqnarray*}\qedhere
 \end{proof}
\begin{lemma}
\label{lem:type2a-2}
Let $K=(n_1,\dots, n_{2p+3})$ be a fibered pretzel knot of Type 2A with $p\geq1$ and where there exists some tangle with $n_i<-2$. Then $|a_{-g(K)+1}|\geq 2$.
\end{lemma}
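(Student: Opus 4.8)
The plan is to pin down the unique state $\xmin$ of minimal $A$-grading and then read off $a_{-g(K)+1}$ from the states sitting one Alexander grading higher, which by Lemma~\ref{lem:trades} are controlled by the trades. Since $K$ is a fibered Type 2A knot, its fiber is the surface obtained by Seifert's algorithm on the pretzel diagram (Theorem~\ref{thm:fiberedpretzels}), so I would orient $K$ so that the $p+2$ positive and $p$ negative odd tangles are the consistently oriented $m_i$ and the unique even tangle $\bar m = \pm 2$ is the single inconsistently oriented $m_{ij}$. Decorating the bottom edge, I would describe $G_B$ as the generalized theta graph joining the root to the top vertex along the paths $T(n_i)$, and record that $\eta$ is constant along each $T(n_i)$ with sign governed by the sign of the odd tangle, while $\bar m$ contributes instead through $G_W$.

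First I would establish uniqueness of $\xmin$. A spanning tree of $G_B$ amounts to a choice of trunk $T(n_k)$ (the one complete path) together with, for each remaining tangle, a choice of which single edge of $T(n_i)$ to omit; because $\eta$ is constant along each path, the $A$-contribution of a non-trunk tangle is minimized at a uniquely determined terminal edge, and the optimal trunk is forced by the sign data. As a consistency check, fiberedness makes $\Delta_K(t)$ monic of degree $g(K)$, so $|a_{-g(K)}| = 1$; hence once one knows that all states in the bottom grading share the parity of $M$, that grading contains exactly one state and $A(\xmin) = -g(K)$.

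With $\xmin$ fixed, Lemma~\ref{lem:trades} places every trade in bigrading $(-g(K)+1, M(\xmin)+1)$, so all trades contribute the single sign $(-1)^{M(\xmin)+1}$ to $a_{-g(K)+1}$. To turn the trade count into a lower bound I must exclude cancellation from any other state in that grading, which I would do by showing that every state of $A$-grading $-g(K)+1$ is a trade: any tree that differs from $\xmin$ in two or more tangles, slides an omitted edge past more than one position, or changes the trunk raises $A$ by at least two, since $\eta$ is constant along each $T(n_i)$ and $\xmin$ is strictly minimal. (It would equally suffice to verify that all states in grading $-g(K)+1$ carry the same $M$-parity.) Then $|a_{-g(K)+1}|$ equals the number of trades.

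Finally I would count. Lemma~\ref{lem:trades} gives $r - \ell - 1 = 2p + 2 - \ell$ trades, where $\ell$ counts the non-trunk length-one tangles, and the even tangle $\bar m$ together with the hypothesized tangle $n_i < -2$ are two tangles of length exceeding one. Minimality forbids a $+1$ and a $-1$ tangle from coexisting: if no $+1$ tangle occurs then all $p+2$ positive odd tangles have length at least $3$ and $\ell \leq p-1$, giving at least $p+3$ trades, while if no $-1$ tangle occurs then all $p$ negative odd tangles are long and a short estimate gives at least $p$ trades. The only configuration not immediately yielding two trades is $p = 1$ with all three positive tangles equal to $+1$, namely $K = (1,1,1,-m,2)$ with $m \geq 3$ odd; I would dispatch this either by checking that the minimal-state trunk is one of the length-one tangles (so again two trades) or by computing $\Delta_K$ directly. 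I expect the main obstacle to be the rigorous identification of $\xmin$ together with the proof that trades exhaust Alexander grading $-g(K)+1$; once the spanning-tree and $\eta$ bookkeeping is in place, the counting and the bound $|a_{-g(K)+1}| \geq 2$ follow.
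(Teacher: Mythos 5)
There is a genuine gap at the heart of your argument: the claim that every state in Alexander grading $-g(K)+1$ is a trade, because ``changing the trunk raises $A$ by at least two.'' This is false for Type 2A knots. The trunk of $\xmin$ lies along the positive odd tangle $n_{2p+2}$, and a state whose trunk is instead $T(\bar m)$ (with $|\bar m|=2$, so its black edges have $\eta=0$ and only the single white edge $\tilde e^*$ carries a nonzero label) sits in grading exactly $A(\xmin)+1$; likewise, a state whose trunk is a length-one tangle $T(n_j)$, $n_j=\pm1$, sits in grading $A(\xmin)+1$. The paper must enumerate precisely these extra states $\x'$ and $\x_j$ and compute their Maslov gradings: the $\x_j$ with $j$ even land in $M(\xmin)+2$, opposite in parity to the trades, so they \emph{cancel} against trades in the state sum. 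Your conclusion that $|a_{-g(K)+1}|$ equals the number of trades is therefore wrong, and the error is not cosmetic: for $\bar m=-2$ the correct count is $|a_{-g(K)+1}| = (2p+2-\ell)-(\ell+1) = 2p-2\ell+1$, which equals $1$ when $\ell=p$, i.e.\ when every negative tangle other than $\bar m$ has length one. Those knots (handled in the paper by a separate determinant estimate, Lemma~\ref{lem:type2a-3}) have three or more trades yet $a_{-g(K)+1}=\pm1$, directly refuting your intermediate claim.

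A symptom of the problem is that your proof never genuinely uses the hypothesis that some $n_i<-2$; you invoke it only to bound $\ell$ in the trade count. In the paper that hypothesis is exactly the condition $\ell<p$ ensuring the cancellation between trades and the opposite-parity states $\x_j$ does not collapse the coefficient to $\pm1$. To repair your argument you would need to (i) drop the assertion that trades exhaust grading $-g(K)+1$, (ii) identify the non-trade states with trunk along $\bar m$ or along length-one tangles, (iii) compute their $M$-gradings as in Claim~\ref{claim:mgradings1}, and (iv) carry out the signed count, at which point the hypothesis $n_i<-2$ enters essentially. Your identification and uniqueness of $\xmin$ and your use of Lemma~\ref{lem:trades} are otherwise in the same spirit as the paper.
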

\begin{proof}
The condition of being a Type 2A fibered knot is preserved under permutation of tangles. As mentioned in Section~\ref{subsec:det}, the genus and $\Delta_K(t)$ are also preserved. Therefore, we may apply mutations to assume that $n_i$ is positive when $i$ is odd and $n_i$ is negative when $i$ is even, except for $n_{2p+2} >0$ and $n_{2p+3}=\bar{m} = \pm2$. Thus for all edges $e\in T(n_i)\subset G_B$,
\[
	\eta(e) =\left\{\begin{array}{ll}
	0 & \text{if }i = 2p+3 \\
	-1& \text{if } i<2p+3 \text{ is odd or } i=2p+2 \\ 
	+1 & \text{if } i\neq 2p+2  \text{ is even}. 
	\end{array}\right. 
\]
\begin{claim}
\label{claim:minunique1}
Orient $K$ so that the strands of the first tangle point downward. Then $K$ admits a unique state $\xmin$ with minimal $A$-grading.
\end{claim}
\begin{proof}[Proof of Claim~\ref{claim:minunique1}]
Let $\xmin$ be the state defined as follows and illustrated by the example in Figure~\ref{fig:type2a}. The trunk of $\xmin$ is $T(n_{2p+2})$. The intersections $T_{\xmin}$ with $T(n_i)$ for $i=1, \dots, 2p+1$ are incident to the top vertex, and therefore are not incident to the black root. There is a single edge in $T_{\xmin}\cap T(n_{2p+3})$ which is incident to the root if $\bar{m}=2$ or incident to the top vertex if $\bar{m}=-2$. 

By choice of the orientation on $K$, $T(n_i)$ is oriented downward for $i$ odd, and upward for $i$ even, except for $T(n_{2p+3})$, where instead the corresponding edges of $T^*_{\xmin}$ are oriented. In $T_{\xmin}$, the orientation induced by the root points downward along all $T(n_i)$, $i<2p+2$, and points upward along the trunk. 
Hence for all $e\in T_{\xmin}$,
\[
	\sigma(e) =\left\{\begin{array}{ll}
		 0 & \text{if } e\in T(n_{2p+3})\\
		+1& \text{if } e\in T(n_i), \text{for $i$ odd and } i\neq 2p+3\text{ or } i=2p+2\\
		-1 & \text{if } e\in T(n_i), \text{for $i$ even and } i\neq 2p+2.
	\end{array}\right.
\]
As for edges in the white tree $T^*_{\xmin}$, all are labeled $\eta(e)=\sigma(e)=0$ except for the one edge $\tilde{e}^*$ corresponding with $n_{2p+3}=\bar{m}$, which is labeled $\eta(\tilde{e}^*)=\pm1$ when $\bar{m}=\pm2$. In particular, the maximal tree with minimal $A$-grading is constructed so that $\sigma(\tilde{e}^*)\eta(\tilde{e}^*)=-1$ regardless of the sign of $\bar{m}$. See Figure \ref{fig:type2a}. Thus, every edge of $\tree_{\xmin}$ with $\eta(e)\neq0$ contributes $\sigma(e)\eta(e)=-1$ to the sum for $A(\xmin)$. 

We show that $A(\xmin)$ is minimal and $\xmin$ is unique. Fix an arbitrary state $\x$. Because there is exactly one edge $e^*\in T^*_\x$ labeled $\eta(e^*)\neq 0$ then,
\[
	A(\x) = \frac{1}{2} \big( \sigma(e^*)\eta(e^*) + \sum_{e \in T_\x} \sigma(e) \eta(e) \big).
\] 
In particular, for $\xmin$, 
\[
	A(\xmin) = \frac{1}{2} \big(-1 +  \sum_{e \in T_{\xmin}} \sigma(e) \eta(e) \big) .
\]
Suppose $\x$ is a state with the same trunk as $\xmin$ but for which $T_\x$ differs from $T_{\xmin}$ along any set of edges of $T(n_i)$, $i=1,\dots, n_{2p+1}$. Then there exists some edge of $T_\x$ which is incident to the root and this edge will contribute $\sigma(e)\eta(e)=+1$ to the sum for $A(\x)$. Since the contribution of the white tree $T^*_\x$ is not impacted, $A(\x)>A(\xmin)$. If instead $\x$ shares the same trunk as $\xmin$ but $T_\x$ differs from $T_{\xmin}$ along $T(n_{2p+3})$, then the edge $e^*\in T^*_\x$ will contribute $\sigma(e^*)\eta(e^*)=+1$ to the sum for $A(\x)$, and again $A(\x)>A(\xmin)$. Now, suppose $\x$ has a different trunk from $\xmin$. If the trunk of $\x$ is $T(n_{2p+3})$ and $T(n_i) \cap T_{\x}$ agrees with $T(n_i)\cap T_{\xmin}$ for $i=1,\dots, 2p+1$, then $A(\x)=A(\xmin)+1$. If the trunk of $\x$ is $T(n_{2p+3})$ and $T(n_i) \cap T_{\x}$ does not agrees with $T(n_i)\cap T_{\xmin}$, then $A(\x)>A(\xmin)+1$. If instead the trunk of $\x$ is $T(n_i)$ for some $i=1,\dots 2p+1$, then certainly $A(\x)\geq A(\xmin)+1$. Hence $A(\xmin)$ is minimal, and moreover, it follows from the above discussion that $\xmin$ is unique.
 \end{proof}
\begin{figure}[h]
        	\labellist
	\footnotesize\hair 1pt
	\pinlabel $\tilde{e}^*$ at 305 205
	\pinlabel $+1$ at 325 175
	\pinlabel $-1$ at 95 180
	\pinlabel $-1$ at 35 95
	\pinlabel $0$ at 35 30
	\pinlabel $+1$ at 165 160
	\pinlabel $+1$ at 145 120
	\pinlabel $0$ at 135 30
	\pinlabel $0$ at 195 90
	\pinlabel $-1$ at 215 45
	\pinlabel $-1$ at 270 120
	\pinlabel $-1$ at 215 160				
	\pinlabel $0$ at 195 90		
	\pinlabel $0$ at 305 55
	\endlabellist
	\centering
                	\includegraphics[height=45mm]{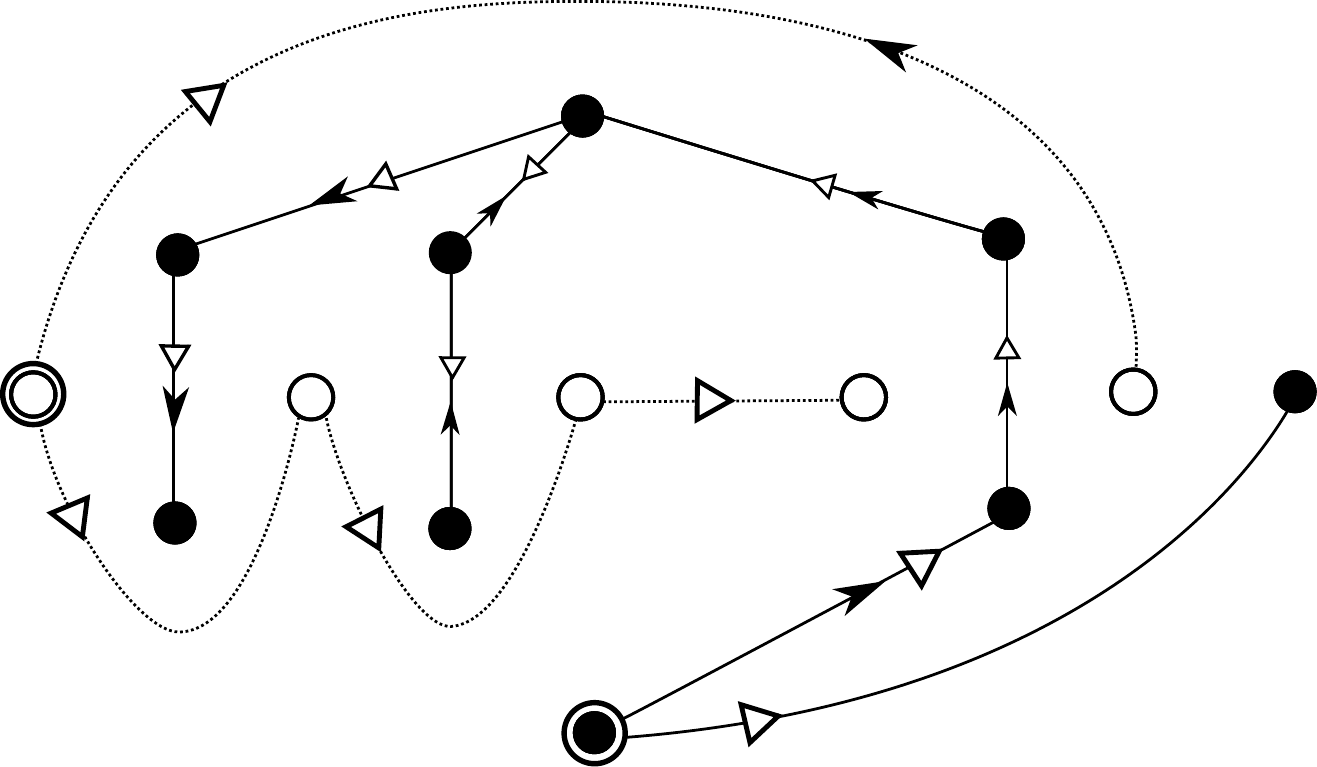}
	\caption{The trees $\tree_{\xmin}$ corresponding with the unique minimal state $\xmin$ of the Type 2A fibered knot $K=(3,-3,1,3,2)$. Edges in the diagram are labeled by $\eta$ and $\tilde{e}^*$ is indicated.}
    	\label{fig:type2a}
\end{figure}
Let $\ell$ be the number of length one tangles excluding the trunk. By Lemma~\ref{lem:trades}, there are $2p+2-\ell$ trades, all supported in bigradings $(-g(K)+1, M(\xmin) +1)$. To determine that $|a_{-g(K)+1}|\geq 2$, we need to count the other states in $A$-grading $-g(K)+1$ and compute their $M$-gradings. 

Because $\bar{m}=\pm2$, all of the trees which share the same trunk as $T_{\xmin}$ which are not trades represent states which have an $A$-grading greater than $-g(K)+1$\footnote{We remark that when $|\bar{m}|>2$, there exist states in $A$-grading $-g(K)+1$ which arise from configurations other than trades or trees with new trunks.}. Thus, the states in $A$-grading $-g(K)+1$ which are not trades are states with different trunks. One of these states is denoted $\x'$, where $T_{\x'}$ differs from $T_{\xmin}$ only as follows. The trunk of $\x'$ is $T(n_{2p+3})$ and $T_{\x'}\cap T(n_{2p+2})$ is incident to the root. If $\bar{m}=-2$, $\x'$ is supported in bigrading $(-g(K)+1,M(\xmin)+2)$ and if $\bar{m}=+2$, $\x'$ is supported in bigrading $(-g(K)+1,M(\xmin)+1)$. Each remaining state in $A$-grading $-g(K)+1$ corresponds with a state denoted $\x_j$, where $T_{\x'}$ differs from $T_{\xmin}$ only as follows. The trunk of $\x_j$ is $T(n_j)$ for some $n_j=\pm 1$, $j \neq 2p+2$, and $T(\x_j)\cap T(n_{2p+2})$ is incident to the root. The trunk of $T_{\x_j}$ is necessarily length one because otherwise $A(\x_j)>-g(K)+1$ due to the contribution of at least two edges labeled $\sigma(e)\eta(e)=+1$ in $T(n_j)$.
\begin{claim}
\label{claim:mgradings1}
Let $\x_j$ be as above. Then,
\[
	M(\x_j) =\left\{\begin{array}{ll}
	M(\xmin) +1 & j \text{ odd and } j \neq 2p+3\\
	M(\xmin) +2 & j \text{ even and } j\neq 2p+2.
	\end{array}\right.
\]
\end{claim}
\begin{proof}[Proof of Claim~\ref{claim:mgradings1}]
In $T(n_{2p+2})$, all edges are labelled $\eta(e)=-1$. For all $e\in T_{\xmin}\cap T(n_{2p+2}) $,  $\sigma(e)=+1$. Because $n_j=\pm1$, $T_{\xmin}\cap T(n_j)=\emptyset$. Now $T_{\x_j}\cap T(n_{2p+2})$ contains $n_{2p+2}-1$ edges, all with $\sigma(e)=+1$. For the single edge $e\in T(n_j)\cap T_{\x_j}$, $\sigma(e)=\eta(e)=-1$ if $j$ is odd and $\sigma(e)=\eta(e)=+1$ if $j$ is even. All other edges and labels of $T_{\x_j}$ and $T_{\xmin}$ agree and the changes in the white graphs do not affect the $M$-grading.  The net change to the $M$-grading from $\xmin$ to $\x_j$ is $+1$ or $+2$, respectively. 
\end{proof}
By Equation \ref{eqn:euler}, the coefficient $|a_{-g(K)+1}|$ is given by the absolute value of the difference in the numbers of states in $M$-gradings $M(\xmin)+1$ and $M(\xmin)+2$. Suppose first that $\bar{m}=2$. Since $K$ is minimally presented, there are no $j$ with $n_j=-1$. Thus we may assume any tangle of length one is positive, and therefore all states in $A$-grading $-g(K)+1$ are supported in $M$-grading $M(\xmin)+1$. This implies $|a_{-g(K)+1}|>1$, since clearly there is more than one such state. Suppose now that $\bar{m}=-2$. We may similarly assume each length one tangle is negative. Since $n_{2p+2}>0$, the trunk is not length one and therefore $\ell$ is the number of length one tangles. By Lemma~\ref{lem:trades} and Claim~\ref{claim:mgradings1},
\[
	|a_{-g(K)+1}| = (2p+2-\ell)-(\ell +1) =2p-2\ell +1,
\]
and so $|a_{-g(K)+1}| > 1$ whenever $p>\ell$. When $p=\ell$, then every negative tangle other than $\bar{m}$ is length one. In other words, whenever there exists some tangle with $n_i < -2$, then $|a_{-g(K)+1}|\geq 2$. This verifies the statement of the lemma.
\end{proof}
In light of Lemmas \ref{lem:type2a-1} and \ref{lem:type2a-2}, after isotopy and our assumptions on mirroring,
\[
	K = (-2, \underbrace{-1,\dots, -1}_{p},w_1, w_2, \dots, w_{p+2}  )
\]  
where $w_i\geq3$ is odd for $1\leq i \leq p+2$ and $p\geq1$.
\begin{lemma}
\label{lem:type2a-3}
Let $K$ be as above. Then $det(K)>2g(K)+1$.
\end{lemma}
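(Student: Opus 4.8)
The plan is to compute $\det(K)$ and $g(K)$ explicitly in terms of the $w_i$ and then verify the inequality by an elementary estimate. Write $W = \prod_{i=1}^{p+2} w_i$, $\Sigma = \sum_{i=1}^{p+2} w_i$, and $S = \sum_{i=1}^{p+2} \prod_{j \neq i} w_j = W \sum_i w_i^{-1}$. First I would compute the determinant from the same Seifert-invariant computation for $\Sigma_2(K)$ that underlies Equation~\ref{eqn:det}; for arbitrary signs this reads $\det(K) = \big| \prod_i n_i \cdot \sum_i \tfrac{1}{n_i}\big|$. Applied to the tangles $(-2,-1,\dots,-1,w_1,\dots,w_{p+2})$ this gives
\[
	\det(K) = 2W\,\Big| -\tfrac12 - p + \sum_{i=1}^{p+2} \tfrac{1}{w_i}\Big| = (2p+1)W - 2S,
\]
where the absolute value is resolved using that each $w_i \ge 3$ forces $\sum_i \tfrac{1}{w_i} \le \tfrac{p+2}{3} < p + \tfrac12$.

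Next I would pin down the genus. Since $K$ is a fibered Type 2A knot, Theorem~\ref{thm:fiberedpretzels} identifies its fiber with the surface produced by Seifert's algorithm on the pretzel diagram, so $g(K) = \tfrac12(c - s + 1)$ where $c$ is the crossing number and $s$ the number of Seifert circles. In the Type 2A orientation only the unique $m_{ij} = \bar m = -2$ is traversed incoherently, contributing $|\bar m| - 1 = 1$ internal Seifert circle, while the $2p+2$ coherently oriented tangles contribute none; together with the two ``big'' Seifert circles of the pretzel closure this gives $s = 3$. (This is consistent with the count $g = \tfrac12(r+q)$ recorded for $(-2,r,q)$ in Lemma~\ref{lem:type2a-1} and with the Type 1 bookkeeping in Lemma~\ref{lem:det-rs}.) Since $c = 2 + p + \Sigma$, this yields $g(K) = \tfrac12(p + \Sigma)$, i.e.\ $2g(K) + 1 = p + \Sigma + 1$.

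The inequality $\det(K) > 2g(K) + 1$ then reduces to routine estimates. Each $\prod_{j \neq i} w_j \le W/3$, so $2S \le \tfrac{2(p+2)}{3}W$ and hence $\det(K) \ge \tfrac{4p-1}{3}W \ge W$ for all $p \ge 1$. It therefore suffices to prove $W > p + \Sigma + 1$ for a product of $p + 2 \ge 3$ integers each at least $3$. Viewing $W - \Sigma$ as a function of the $w_i$, every partial derivative $\prod_{j\neq i} w_j - 1 \ge 8$ is positive, so the minimum occurs at $w_i = 3$ for all $i$, where $W - \Sigma = 3^{p+2} - 3(p+2) > p + 1$ because $3^{p+2} > 4(p+2) - 1$ for every $p \ge 1$ (at $p = 1$, $27 > 11$). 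This gives $W > p + \Sigma + 1$ and closes the argument.

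The only genuinely delicate step is the Seifert-circle bookkeeping that forces $s = 3$, and with it the genus formula; the determinant computation and the final inequality are then mechanical. Rather than re-derive the pretzel Seifert surface from scratch, I would justify $s = 3$ by matching it against the already-established genera of the three-tangle family $(-2,r,q)$ and of the Type 1 knots, both of which exhibit exactly two ``big'' circles plus one internal circle per incoherently oriented tangle.
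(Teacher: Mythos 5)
Your overall strategy is the same as the paper's: compute $\det(K)$ from the Seifert invariants of $\Sigma_2(K)$, compute $g(K)$ from the Seifert algorithm surface (which is the fiber by Theorem~\ref{thm:fiberedpretzels}), and compare by elementary estimates. Your determinant computation $\det(K)=(2p+1)W-2S$ agrees with the paper's, and your closing inequalities are sound. The error is in the Seifert-circle count. Each \emph{pair of adjacent coherently oriented tangles} bounds its own Seifert circle in the pretzel closure: after the oriented resolution, the right vertical arc of one coherent tangle, the top closure arc, the left vertical arc of the next tangle, and the bottom closure arc form a circle. With $2p+2$ coherent tangles and one incoherent tangle $\bar m=-2$ (whose cap and cup merge the two gaps adjacent to it), this gives $2p+2$ such circles plus the one internal circle inside $\bar m$, so $s=2p+3$, not $3$. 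Consequently $g(K)=\tfrac12\bigl(c-s+1\bigr)=\tfrac12(\Sigma-p)$, matching the paper's $\tfrac12\bigl(\sum(w_i-1)+2\bigr)$, rather than your $\tfrac12(\Sigma+p)$. Your consistency check against $(-2,r,q)$ cannot detect this because the two formulas coincide exactly when $p=0$; already for $(-2,-1,3,3,3)$ one has $c=12$, $s=5$, $g=4$, not $5$.

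The mistake happens to be harmless for the conclusion: you overestimate $2g(K)+1$ as $\Sigma+p+1$ instead of the correct $\Sigma-p+1$, so the inequality $\det(K)\ge W>\Sigma+p+1$ that you actually establish is \emph{stronger} than what the lemma requires, and the lemma follows a fortiori. But as written the genus formula is false and should be corrected before the argument is recorded; with $s=2p+3$ in place of $s=3$ the rest of your proof goes through verbatim and is essentially identical to the paper's.
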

\begin{proof}
Since $K$ is a Type 2A fibered knot, then by Theorem~\ref{thm:fiberedpretzels}, the minimal genus Seifert surface and the fiber for $K$ is  obtained by applying the Seifert algorithm to the standard projection. This gives 
\begin{equation*}
	g(K) = \frac{1}{2} \big( \sum^{p+2}_{i=1}(w_i-1) + 2 \big).
\end{equation*}
Let $W=w_1 \dots w_{p+2}$. By Equation~\ref{eqn:det} and the fact that $w_i\geq3$ is odd for $1\leq i \leq p+2$,
\begin{eqnarray*}
	det(K) &=& \big| -2W \big(-p-\frac{1}{2}+ \sum^{p+2}_{i=1}\frac{1}{w_i} \big) \big| \\
	&=& \big| W + 2W(p - \sum^{p+2}_{i=1}\frac{1}{w_i} ) \big| \\
	&\geq&  \big| W + 2W ( \frac{2p-2}{3} )  \big|. \\
\end{eqnarray*}
Since $p\geq 1$, 
\begin{eqnarray*}	
	\big| W + 2W ( \frac{2p-2}{3} )  \big| &\geq&  W  \\
	&>&(\sum^{p+2}_{i=1} w_i ) + 1\\
	&\geq& \sum^{p+2}_{i=1} (w_i-1) + 3\\
	&=& 2 g(K) + 1 . \hspace{2cm} \qedhere
\end{eqnarray*}
\end{proof}
\subsection{Type 2B}
\label{subsection:type2b}
A Type 2B fibered pretzel knot $K$ has exactly one even tangle $\bar{m}$, which is the unique $m_{ij}$, $p$ positive odd tangles, and $p$ negative odd tangles, where $p\geq 1$. The auxiliary link $L'\neq \pm(2,-2,\dots, 2, -2)$, and $K$ fibers if and only if $L'$ fibers (see Equation~\ref{eqn:L'} for the construction of $L'$).  
\begin{lemma}
\label{lem:type2b}
For all minimally presented fibered pretzel knots of Type 2B, $|a_{-g(K)+1}|\geq 2$.
\end{lemma}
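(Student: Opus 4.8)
The plan is to adapt the Kauffman-state argument of Lemma~\ref{lem:type2a-2} to the balanced sign distribution of Type 2B. Using that $\Delta_K$ and $g(K)$ are invariant under permutation of tangles (Section~\ref{subsec:det}), I would first mutate $K$ into a normal form: after mirroring, write $K=(n_1,\dots,n_{2p+1})$ with the $p$ positive and $p$ negative odd tangles alternating in sign and the unique even tangle $\bar m=\pm2$ placed last. I would then orient $K$ so the strands of the first tangle point downward and record the edge-labels $\eta$ exactly as in the Type 2A analysis: $\eta\equiv0$ on the black edges of $T(\bar m)$ with the single white edge $\tilde e^*$ carrying $\eta(\tilde e^*)=\pm1$, and $\eta=\mp1$ on the black edges of each odd tangle according to its sign.

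With this normal form fixed, I would produce a unique minimally $A$-graded state $\xmin$ by the same bookkeeping as Claim~\ref{claim:minunique1}: choose a trunk, push every non-trunk odd tangle to the top vertex, and seat the single $T(\bar m)$-edge so that $\sigma(\tilde e^*)\eta(\tilde e^*)=-1$, then check that introducing any root-incident edge or changing the trunk strictly increases $A$. Lemma~\ref{lem:trades} then supplies $r-\ell-1=2p-\ell$ trades, all in bigrading $(A(\xmin)+1,M(\xmin)+1)$, and I would enumerate the remaining states at $A$-grading $A(\xmin)+1$ --- those whose trunk has moved to $T(\bar m)$ or to a length-one tangle --- and compute their $M$-gradings as in Claim~\ref{claim:mgradings1}. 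The coefficient $a_{-g(K)+1}$ is the signed count of these states, i.e.\ the difference between the numbers supported in $M(\xmin)+1$ and in $M(\xmin)+2$; because the sign distribution is balanced and $p\geq1$, this difference should come out proportional to $p$ and in particular be at least $2$, with the minimal-presentation hypotheses (no coexisting $+1$ and $-1$ tangles, and no $\pm1$ adjacent to a $\mp2$) fixing the signs of all length-one tangles so the count is unambiguous.

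The main obstacle, and the key difference from Type 2A, is that Theorem~\ref{thm:fiberedpretzels} does \emph{not} guarantee the fiber of a Type 2B knot to be the Seifert-algorithm surface; indeed Gabai's analysis (Equation~\ref{eqn:L'}) desums a minimal genus Seifert surface that may a priori differ from the pretzel surface. The whole state-sum plan hinges on whether the pretzel diagram nonetheless realizes $g(K)$, for only then is $A(\xmin)=-g(K)$ and the bottom coefficient of $\Delta_K$ equal to $\pm1$; otherwise the state sum on the pretzel diagram would cancel at its extreme $A$-grading, and $\xmin$ would be neither minimal at $-g(K)$ nor unique. I would resolve this using the desumming directly: it exhibits the pretzel surface as a Murasugi sum built from the fiber of $L'$, and since a Murasugi sum of fiber surfaces is again a fiber surface, once $L'$ fibers the pretzel surface is itself a fiber and hence minimal genus. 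Granting this, the only remaining subtlety is the one inherited from the balanced signs --- uniqueness of $\xmin$, for which the even $m_{ij}$ tangle $\bar m$ must break the tie between the competing positive and negative trunk choices --- after which the count of trades against alternative-trunk states proceeds as in Type 2A and yields $|a_{-g(K)+1}|\geq 2$.
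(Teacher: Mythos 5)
Your plan founders on the assumption that $\bar m=\pm2$. That is forced for Type 2A (Gabai's criterion there is $|m_{ij}|=2$), but for Type 2B the fibering condition is only that the auxiliary link $L'$ fibers, and $\bar m$ passes into $L'$ unchanged; the Type 1 classification then allows $\bar m$ to be an arbitrary even integer when there are no length-one tangles (where $L'=\pm(2,-2,\dots,2,-2,\bar m)$), and in fact \emph{forces} $\bar m=-4$ when there is exactly one length-one tangle. When $|\bar m|\geq4$ the Type 2A template breaks: if $T(\bar m)$ is not the trunk, the maximal white tree contains one of the $|\bar m|$ parallel white edges dual to $\bar m$, and more than one of these choices realizes $\sigma(e^*)\eta(e^*)=-1$, so the state you construct is not the unique minimizer; moreover there are states in $A$-grading $-g(K)+1$ that are neither trades nor new-trunk states (the paper flags exactly this failure in the footnote inside the proof of Lemma~\ref{lem:type2a-2}). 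So the signed count you propose is incomplete, and the conclusion does not follow as written.

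The missing ingredient is to use the fiberedness of $L'$ to pin down the shape of $K$ before counting. The paper's proof splits on the number $c$ of length-one tangles: $c\geq3$ makes $L'$ non-fibered, $c=2$ contradicts minimal presentation, $c=1$ forces $K=(1,m_1,\dots,m_{2p-1},-4)$ up to mirroring --- which is then \emph{isotoped to a non-pretzel diagram} whose white graph carries only $\eta=0$ labels, so the count can be done purely in the black graph --- and $c=0$ forces the odd tangles to alternate in sign, after which the unique minimal state is built with its trunk along $T(\bar m)$ (so no white edge of $\bar m$ enters $T^*$ and the ambiguity above never arises), giving exactly $2p$ trades and $|a_{-g(K)+1}|=2p$. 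Finally, your ``main obstacle'' is not one: once a unique minimally $A$-graded state exists on \emph{any} decorated diagram, $A(\xmin)=-g(K)$ follows from monicity of $\Delta_K$ for fibered knots, as noted in Section~\ref{subsec:countinglemmas}; the Murasugi-sum detour, while not incorrect, is unnecessary and does not repair the counting gap above.
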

\begin{proof}
Suppose first that $K$ has $c>0$ length one tangles. Recall length one tangles do not factor into $L'$. If $c\geq3$, then $L'$ is not a Type 1 fibered links (see Theorem \ref{thm:fiberedpretzels}). If $c = 2$, the fiberedness of $L'$ implies $\bar{m}=\pm2$ when the length one tangles are $\mp 1$, and this is not allowed because $K$ is then not minimally presented. 

Suppose $c=1$. Since $L'$ has an even number of tangles, $L'=\pm(2,-2,\dots, 2,-4)$. Thus up to mirroring,
\[
	K=(1, m_1, \dots, m_{2p-1}, -4)
\] 
where $m_i<-2$ for $1\leq i \leq 2p-1$, $i$ odd and $m_i>2$ for $1<i<2p-1$, $i$ even. Isotope $K$ according to Figure~\ref{fig:isotopies}.
\begin{figure}[htb]
	\labellist
	\small \hair 2pt
	\pinlabel $m_1,\dots,m_{2p-1}$ at 160 100
	\pinlabel $m_1,\dots,m_{2p-1}$ at 545 100
	\endlabellist
\centering
\includegraphics[height=35mm]{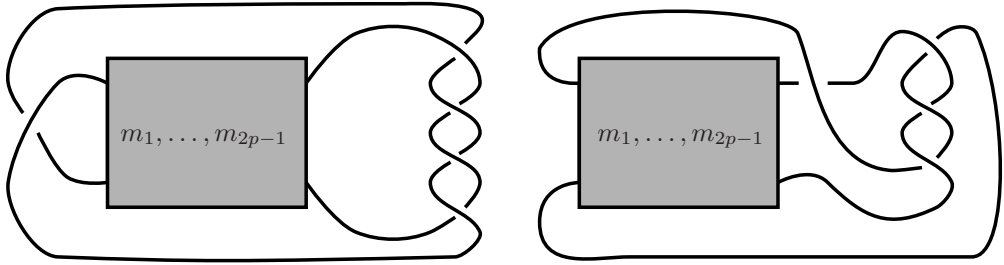}
\caption[The isotopy performed on the Type 2B knot $K$]{The isotopy performed on the Type 2B knot $K=(1, m_1, \dots, m_{2p-1}, -4)$.}
\label{fig:isotopies}
\end{figure}
\begin{figure}[ht]
	\labellist
	\small \hair 2pt
	\pinlabel $-1$ at 350 280
	\pinlabel $e_1$ at 400 300
	\pinlabel $-1$ at 320 110
	\pinlabel $-1$ at 385 110
	\pinlabel $-1$ at 445 110
	\pinlabel $e_2$ at 375 80	
	\pinlabel $e_3$ at 430 80	
	\pinlabel $e_4$ at 290 80			
	\pinlabel $\vdots$ at 102 215
	\pinlabel $\vdots$ at 8 215
	\pinlabel $\vdots$ at 230 215	
	\pinlabel $\dots$ at 170 215
	\endlabellist
\centering
\includegraphics[height=55mm]{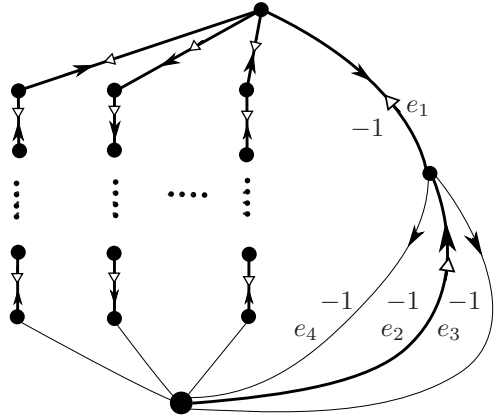}
\caption{The black graph after isotoping the Type 2B knot $(1, m_1, \dots, m_{2p-1}, -4)$ with $T_{\xmin}$ in bold. }
\label{fig:type2b}
\end{figure}
After this isotopy, the knot diagram admits a black graph whose edges are all labeled $\eta(e)=\pm1$, and a white graph where all of the edges are labeled $0$. Thus we only need to consider maximal trees of the black graph to compute $\Delta_K(t)$. This is no longer a pretzel presentation, but as can be seen in Figure \ref{fig:type2b} we can make sense of the terms trunk, top vertex, trade, etc. and may apply the content of Section \ref{subsec:countinglemmas} in an analogous manner.
\begin{claim}
\label{claim:minunique2}
After isotopy, orient $K$ so that the strands of the first tangle point upward. Then there is a unique state $\xmin$ with minimal $A$-grading.
\end{claim}
\begin{proof}[Proof of Claim~\ref{claim:minunique2}]
Refer to Figures \ref{fig:isotopies} and \ref{fig:type2b}. Since $m_i<-2$ for $i$ odd and $m_i>2$ for $i$ even, then for all edges $e\in T(m_i)\subset G_B$, $i=1,\dots 2p-1$,
\[
	\eta(e) =\left\{\begin{array}{ll}
	+1 & e\in T(m_i), i  \text{ odd} \\
	-1& e\in T(m_i), i \text{ even}. 
	\end{array}\right. 
\]
There are four additional edges in $G_B$, and each is labeled $\eta(e)=-1$. Let $\xmin$ be the state with trunk $e_1\cup e_2$ and with no other edges incident to the black root. For all $e\in T_{\xmin}$ with $e\neq e_1$, $\sigma(e)\eta(e)=-1$, and for $e_1$, $\sigma(e_1)\eta(e_1)=+1$. Because $|m_i|>2$ for $i=1,\dots, 2p-1$, then for any other state, the corresponding $A$-grading is strictly greater than $A(\xmin)$. Hence $\xmin$ is the unique state with minimal $A$-grading. 
\end{proof}
It is easy to verify that there are exactly $2p+1$ states in $A$-grading $-g(K)+1$, all of which are obtained by trades along any of $m_1,\dots , m_{2p-1}$ or by replacing $e_2$ with $e_3$ or $e_4$. Each of these $2p+1$ states is supported in the same $M$-grading, by an argument similar to Lemma~\ref{lem:trades}. Hence, $|a_{-g(K)+1}| = 2p+1$ whenever $K$ contains any tangle of length one, thus completing the proof of Lemma~\ref{lem:type2b} in this case.

Suppose now that there are no tangles of length one in $K$. Since $L'$ is a fibered Type 1 link, $L'$ is isotopic to $\pm(2,-2,2, -2,\dots, 2,-2,n)$, for some $n\in\Z$.  Since $K$ is Type 2B, up to mirroring there exists a permutation of the tangles such that the resulting knot, denoted $K^\tau$, is of the form
\[
	K^\tau=(m_1, \dots, m_{2p}, \bar{m})
\]
where $m_i>0$ when $i$ is odd, $m_i<0$ is negative when $i$ is even, and $\bar{m}$ is even. Since $K^\tau$ has no tangles of length one, the auxiliary link for $K^\tau$ is isotopic to $\pm(2,-2,2, -2,\dots, 2,-2,n^\tau)$, for some $n^\tau\in\Z$, and therefore $K^\tau$ is a Type 2B fibered pretzel knot. Because $K^\tau$ is a fibered mutant of the fibered knot $K$, it shares the same Alexander polynomial and genus. Therefore it suffices to work with $K^\tau$.
\begin{figure}[h]
	\labellist
	\tiny \hair 2pt
	\pinlabel $-1$ at 80 330
	\pinlabel $-1$ at 45 260
	\pinlabel $-1$ at 45 135
	\pinlabel $0$ at 65 70
	\pinlabel $+1$ at 170 320
	\pinlabel $+1$ at 155 260
	\pinlabel $+1$ at 155 135	
	\pinlabel $0$ at 140 70
	\pinlabel $-1$ at 215 320
	\pinlabel $-1$ at 235 260
	\pinlabel $-1$ at 235 135	
	\pinlabel $0$ at 245 70
	\pinlabel $+1$ at 280 300
	\pinlabel $+1$ at 290 260
	\pinlabel $+1$ at 295 135	
	\pinlabel $0$ at 315 70
	\pinlabel $0$ at 355 70
	\pinlabel $0$ at 355 330
	\pinlabel $0$ at 385 260		
	\endlabellist
\centering
\includegraphics[height=55mm]{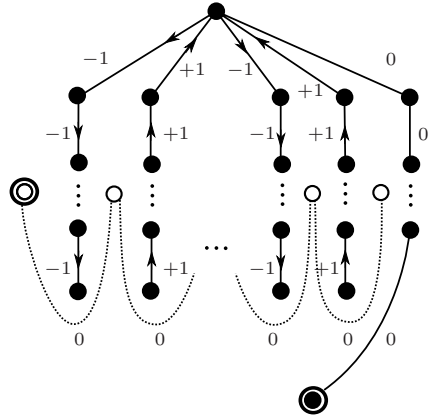}
\caption{The unique minimal state for a Type 2B knot $K^\tau$ with no tangles of length one. Labels $\eta(e)$ are indicated in the diagram.}
\label{fig:type2b-no1}
\end{figure}

When the pretzel diagram for $K^\tau$ is oriented so that the strands of $m_1$ point downward, $K^\tau$ admits a unique state $\xmin$ with minimal $A$-grading $-g(K^\tau)$. This state has trunk $T(\bar{m})$, and no other edges of $T_{\xmin}$ are incident to the root. See Figure~\ref{fig:type2b-no1}. Because the tangles alternate sign, every edge of $\tree_{\xmin}$ contributes $\sigma(e)\eta(e)=-1$ or $0$ to the sum for $A(\xmin)$. Because there are no tangles of length one, any other state will have a strictly greater $A$-grading. Hence $\xmin$ is unique and minimally $A$-graded. Moreover, every state supported in $A$-grading $-g(K^\tau)+1$ is a trade because there is a unique $\bar{m}$ and there are no tangles of length one. By Lemma~\ref{lem:trades}, there are $2p$ trades, each supported in $M$-grading $M(\xmin)+1$. Hence $|a_{-g(K^\tau)+1}|=2p \geq 2$, and this implies $|a_{-g(K)+1}|=2p \geq 2$. 
\end{proof}
%

\section{Type 3 knots} 
\label{sec:type3}
Each tangle in a Type 3 knot is an $m_i$, and therefore all edges $e\in G_B$ and $e^*\in G_W$ are labeled $\eta(e)=\pm1$ and $\eta(e^*)=0$, respectively (see Figure~\ref{fig:labels}).  In particular, the Alexander polynomials of Type 3 knots can be computed solely using the black graph $G_B$ and black maximal trees $T_\x$. Moreover, in this case, $K$ is a pretzel knot of even length, so we will assume $K$ has at least four tangles.

\subsection{Type 3-min}
\label{subsec:type3-min}
A Type 3-min knot $K$ has $p$ positive tangles and $p$ negative tangles. Of these there is a unique tangle of minimal length and an even tangle, which are possibly the same tangle. By assumption, since $K$ is fibered, $L'=\pm(2,-2,\dots, 2, -2)$ also has an even number of tangles, and thus by uniqueness of the minimal tangle, there are no tangles of length one.
\begin{lemma}
\label{lem:type3-min}
For all fibered pretzel knots of Type 3-min other than $K=\pm(3,-5,3,-2)$, there exists a coefficient of the Alexander polynomial such that $|a_s|\geq 2$.
\end{lemma}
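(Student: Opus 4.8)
The plan is to run the same $a_{-g(K)+1}$ analysis used for the Type 2 cases, now exploiting that every tangle is an $m_i$, so that all of $G_W$ is labelled $\eta=0$ and only the black graph $G_B$ and its maximal trees enter the state sum~\ref{eqn:euler}. First I would normalize: since being Type 3-min, the genus, and $\Delta_K(t)$ are all invariant under permuting tangles (Conway mutation, as in Section~\ref{subsec:det}), I would mutate $K$ into an alternating arrangement $K=(m_1,\dots,m_{2p})$ with $m_i>0$ for $i$ odd and $m_i<0$ for $i$ even, recording the unique even tangle $\bar{m}$ and, crucially, the unique tangle $\mu$ of minimal absolute value that Theorem~\ref{thm:fiberedpretzels} supplies for a fibered Type 3-min knot (uniqueness of $\mu$ forcing the absence of length-one tangles, as noted above).

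Next I would locate the unique minimally $A$-graded state $\xmin$. Orienting $K$ so that the strands of $\mu$ point downward, I claim the trunk of $\xmin$ is $T(\mu)$, with every other $T(m_i)\cap T_{\xmin}$ incident to the top vertex, so that every edge of $\tree_{\xmin}$ with nonzero $\eta$ contributes $\sigma(e)\eta(e)=-1$; the verification is the same bookkeeping as in Claims~\ref{claim:minunique1} and~\ref{claim:minunique2}. This is exactly where the hypothesis that the minimal tangle is \emph{unique} is used: any competing choice of trunk is strictly longer and raises the $A$-grading, while any rearrangement of a non-trunk sub-path toward the root introduces a $+1$ contribution, so $\xmin$ is forced to be unique. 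This also pins down $g(K)=-A(\xmin)=\tfrac12\big(\sum_i(|m_i|-1)+1\big)$.

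Then I would enumerate every state in $A$-grading $-g(K)+1$. By Lemma~\ref{lem:trades} these include the $2p-1$ trades, all supported in the single bigrading $(-g(K)+1,\,M(\xmin)+1)$, hence carrying one common sign in Equation~\ref{eqn:euler}. The remaining states in this grading arise from \emph{trunk switches}: moving the trunk off $\mu$ onto a tangle of the next-smallest length and pushing the freed gap to the root. The decisive point is that the sign $(-1)^{M}$ of such a state depends on whether the new trunk tangle is positive or negative, exactly as the parity computation in Claim~\ref{claim:mgradings1}, so these states partially cancel the trades rather than reinforce them. I would tabulate their $M$-gradings and assemble $a_{-g(K)+1}$ as a single signed count expressed through $p$ and the multiset of tangle lengths.

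The main obstacle is precisely this signed bookkeeping: enumerating all trunk-switch configurations and showing the resulting bound $|a_{-g(K)+1}|\ge 2$ is \emph{strict} for every Type 3-min knot except at the one configuration where the trade count $2p-1$ is whittled all the way down to $1$. Tracking the cancellation should show this tight degeneration happens only for $p=2$ with lengths forcing $K=\pm(3,-5,3,-2)$, which is exactly the knot excluded from the statement. For the finitely many small-length knots where the general inequality is borderline I would fall back on Lemma~\ref{lem:det} with $\det(K)=\big|\sum_i\prod_{j\neq i}m_j\big|$, or on a direct computation of $\Delta_K(t)$ as in Lemma~\ref{lem:type2a-1}, to clear the residual cases.
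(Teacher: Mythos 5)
Your overall architecture matches the paper's: mutate to an alternating arrangement, exhibit a unique minimally $A$-graded state whose trunk lies on the unique shortest tangle, weigh the $2p-1$ trades against the trunk-switch states, and fall back on the determinant for residual cases. But there is a concrete error at the heart of the sign bookkeeping. For a Type 3-min knot every tangle is an $m_i$, so \emph{every} black edge has $\eta=\pm1$, and it is then impossible for every edge of $\tree_{\xmin}$ to contribute $\sigma(e)\eta(e)=-1$ as you claim: since the tangles alternate sign while the root-flow orientation points up along the trunk and down along every other tangle, the non-trunk edges each contribute $-1$ but the trunk edges each contribute $+1$. (If all edges contributed $-1$, every spanning tree would have the same $A$-grading and there could be no unique minimum; the $+1$ contribution of the trunk is precisely why the trunk of $\xmin$ is forced onto the \emph{unique} shortest tangle, which is where the Type 3-min hypothesis enters.) Consequently your genus formula is wrong: one gets $g(K)=-A(\xmin)=\tfrac12\bigl(\sum_{i\ne k}(|n_i|-1)-|n_k|\bigr)$ with $n_k$ the minimal tangle, not $\tfrac12\bigl(\sum_i(|n_i|-1)+1\bigr)$; for $(3,-5,3,-2)$ the genus is $3$, not $5$. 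The corrected analysis (Claim~\ref{claim:mgradings2}) shows the trunk-switch states live on tangles of length exactly $|n_k|+1$ and yields $|a_{-g(K)+1}|=|(2p-1)+E-O|\ge p-1$, which is the inequality you actually need.

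Your endgame also underestimates what is left over. The signed count degenerates to $\le 1$ not only at $(3,-5,3,-2)$ but for the entire infinite two-parameter family $(2n+1,-q,2n+1,-2n)$ with $q\ge 2n+3$ odd (exactly the $p=2$ knots with $E=0$ and $O=2$), so ``finitely many small-length knots'' is not the correct description of the residual case. The paper handles this family by first isotoping the diagram to obtain a Seifert surface of genus $\tfrac12(6n+q-3)$, then applying Lemma~\ref{lem:det}; this still leaves five sporadic knots to be dispatched by explicit Alexander polynomial computations, and $(3,-5,3,-2)$ is the unique survivor --- exempted from the lemma because all coefficients of its Alexander polynomial really are $\pm1$ (it is eliminated afterwards by a direct $\hfk$ computation, not by any coefficient argument). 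So beyond the sign correction, your plan needs an explicit argument covering an infinite family, not a finite check.
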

\begin{proof}
By the conditions on $L'$, the tangles of $K$ alternate sign. After mirroring and cyclic permutation, we may assume $n_i$ is positive when $i$ is odd, $n_i$ is negative when $i$ is even, and $|n_{2p}|$ is minimal. For all $e \in T(n_i)$, $\eta(e)=-1$ when $i$ is odd and $\eta(e)=+1$ when $i$ is even. Orient the pretzel diagram so that the first tangle points downward. Let $\xmin$ be the state with trunk $T(n_{2p})$ and no other edges incident to the root (see the example in Figure \ref{fig:type3-min-min}). Because the tangles alternate sign, $\eta(e)\sigma(e)=-1$ for all $e\in T(n_i)$ for $i=1, \dots, 2p-1$, and for $e\in T(n_{2p})$, $\eta(e)\sigma(e)=+1$. Since $n_{2p}$ is the unique minimal length tangle, $A(\xmin)$ is minimal and $\xmin$ is the unique state with minimal $A$-grading. 
 
\begin{figure}[t]
	\labellist
	\tiny \hair 2pt
	\pinlabel $-1$ at 25 230
	\pinlabel $-1$ at 20 190
	\pinlabel $-1$ at 20 155
	\pinlabel $-1$ at 20 120
	\pinlabel $+1$ at 70 225
	\pinlabel $+1$ at 60 190
	\pinlabel $+1$ at 60 155	
	\pinlabel $+1$ at 60 120
	\pinlabel $+1$ at 60 90	
	\pinlabel $+1$ at 60 55
	\pinlabel $-1$ at 120 215
	\pinlabel $-1$ at 125 190
	\pinlabel $-1$ at 125 155	
	\pinlabel $-1$ at 125 120
	\pinlabel $+1$ at 200 235
	\pinlabel $+1$ at 230 190
	\pinlabel $+1$ at 230 120
	\pinlabel $+1$ at 205 60		
	\endlabellist
\centering
\includegraphics[height=60mm]{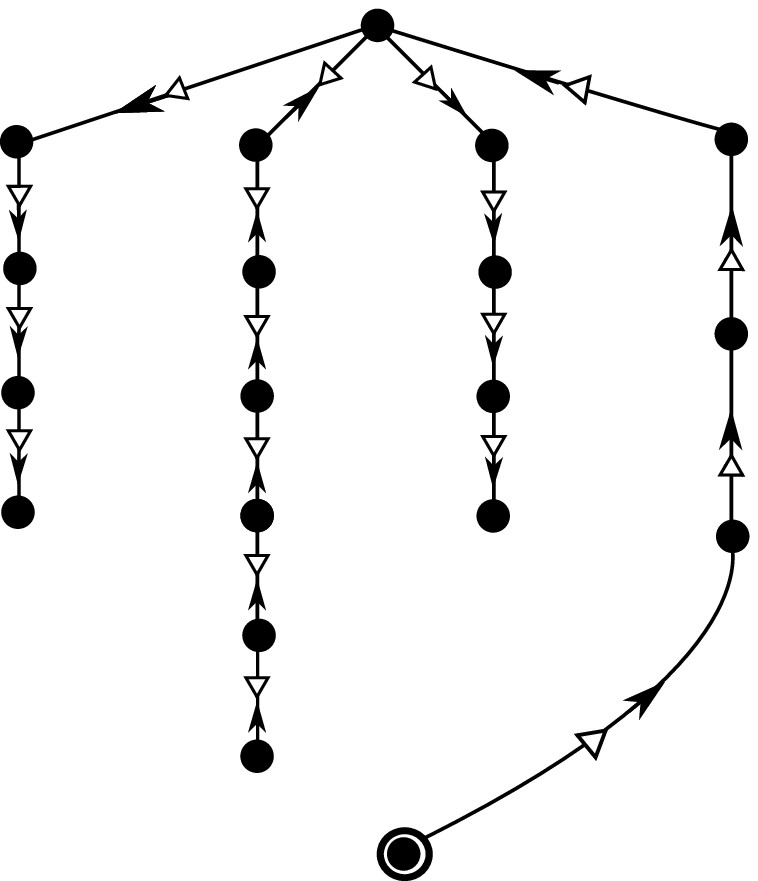}
\caption{An example of the unique minimal state for the Type 3-min knot $(5,-7,5,-4)$, with trunk along the unique tangle of minimum length.}
\label{fig:type3-min-min}
\end{figure}

By Lemma~\ref{lem:trades}, there are $2p-1$ trades in bigrading $(-g(K)+1,M(\xmin)+1)$. Since there is no $m_{ij}$, all states in $A$-grading $-g(K)+1$ which are not trades have a corresponding black tree with trunk $T(n_j)$ such that $|n_j| = |n_{2p}|+1$, by an argument similar to that in Lemma~\ref{lem:type2a-2}. Denote such a state by $\x_j$.  First suppose $n_{2p}$ is odd. Since there is exactly one even tangle, there is at most one state $\x_j$. If no such $\x_j$ exists $|a_{-g(K)+1}|=2p-1$. Otherwise $|a_{-g(K)+1}|=(2p-1)\pm 1$, depending on $M(\x_j)$. Since $2p\geq 4$, we have $|a_{-g(K)+1}|\geq 2$. Now suppose $n_{2p}$ is even. 
\begin{claim}
\label{claim:mgradings2}
Let $n_j$ be a tangle of length $|n_{2p}|+1$. Then,
\[
	M(\x_j) =\left\{
		\begin{array}{ll}
			M(\xmin) & j \text{ odd} \\
			M(\xmin) +1 & j \text{ even.}
		\end{array} \right.
\]
\end{claim}
\begin{proof}[Proof of Claim~\ref{claim:mgradings2}]
Fix $j$ such that  $|n_j|=|n_{2p}|+1$. Recall that the trunk of $T_{\xmin}$ is $T(n_{2p})$, the trunk of $T_{\x_j}$ is $T(n_j)$, and for all $e \in T(n_i)$, $\eta(e)=-1$ when $i$ is odd and $\eta(e)=+1$ when $i$ is even. Additionally, outside of $T(n_{2p})$ and $T(n_j)$, $T_{\xmin}$ and $T_{\x_j}$ agree. For $\xmin$ the values of $\sigma$ are given by
\[
	\sigma(e) =\left\{\begin{array}{ll}
	 +1 & e\in T_{\xmin}\cap T(n_{2p}) \\
	 +1 &  e\in T_{\xmin}\cap T(n_j), j \text{ odd} \\
	 -1 & e\in T_{\xmin}\cap T(n_j), j \text{ even,} \\
	\end{array}\right.
\]
and for $\x_j$ the values of $\sigma$ are given by
\[
	\sigma(e) =\left\{\begin{array}{ll}
	 -1 & e\in T_{\x_j}\cap T(n_{2p}), j \text{ odd or even }\\
	 -1 & e\in T_{\x_j}\cap T(n_j), j \text{ odd} \\
	 +1 &  e\in T_{\x_j}\cap T(n_j), j \text{ even.} 
	\end{array}\right.
\]
Suppose $j$ is odd. Then because $|n_j| = |n_{2p}|+1$,
\begin{equation*}
	M(\x_j)-M(\xmin) 
		= \sum_{	
			\begin{tiny}\begin{array}{c}
				e \in T_{\x_j} \\
				\sigma(e)=1
			\end{array} \end{tiny}
		} \hspace{-5mm} \eta(e) 
		- \hspace{-3mm} \sum_{	
			\begin{tiny}\begin{array}{c}
				e \in T_{\xmin} \\
				\sigma(e)=1
			\end{array} \end{tiny}
		}\hspace{-5mm} \eta(e) 
		=  -\big( |n_{2p}|-(|n_j|-1) \big)  = 0.
\end{equation*}
Suppose $j$ is even. Then
\begin{equation*}
	M(\x_j)-M(\xmin) 
		= \sum_{	
			\begin{tiny}\begin{array}{c}
				e \in T_{\x_j} \\
				\sigma(e)=1
			\end{array} \end{tiny}
		} \hspace{-5mm} \eta(e) 
		- \hspace{-3mm} \sum_{	
			\begin{tiny}\begin{array}{c}
				e \in T_{\xmin} \\
				\sigma(e)=1
			\end{array} \end{tiny}
		}\hspace{-5mm} \eta(e) 
		=  |n_j| -  |n_{2p}|   =1. \qedhere
\end{equation*}
\end{proof}
\begin{figure}[h]
	\labellist
	\footnotesize \hair 2pt
	\pinlabel $2n+1$ at 50 27
	\pinlabel $-q$ at 150 120
	\pinlabel $2n+1$ at 210 50
	\pinlabel $-2n$ at 250 100
	\pinlabel $2n$ at 420 90
	\pinlabel $-q+1$ at 530 90
	\pinlabel $2n$ at 660 90
	\pinlabel $-2n+1$ at 767 90		
	\pinlabel $\simeq$ at 340 180
	\endlabellist
\centering
\includegraphics[height=45mm]{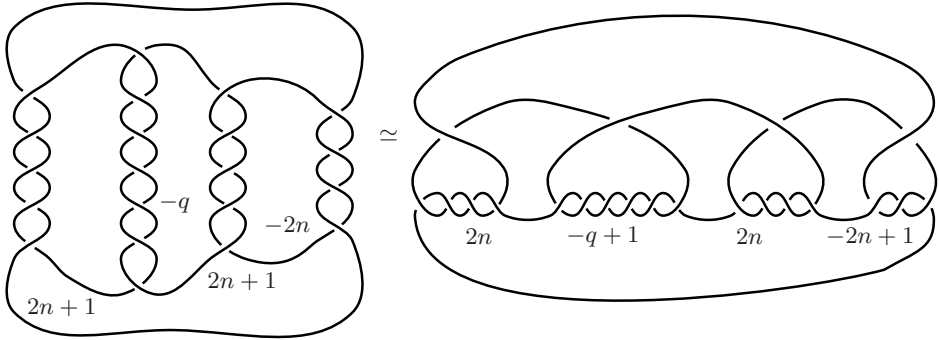}
\caption{The isotopy performed on $(2n+1, -q, 2n+1, -2n)$ to obtain a Seifert surface with reduced genus.}
\label{fig:isotopy3min-real}
\end{figure}
Let $E$ and $O$ be the number of states $\x_j$ with $j$ even and odd, respectively, and recall that there are $2p-1$ trades supported in $M$-grading $M(\xmin)+1$. By Claim~\ref{claim:mgradings2},
\[
	|a_{-g(K)+1}| =|(2p-1)+E-O |.
\]
Therefore $|a_{-g(K)+1}| \geq p -1$, so whenever $p>2$ we are done. 

The case $p=2$ remains. In particular, $|a_{-g(K)+1}| =|(2p-1)+E-O | \leq 1$ only when $E=0$ and $O=2$. Thus it suffices to consider
\[
	K =(2n+1, -q, 2n+1, -2n)
\]
where $n\geq1$ and $q\geq2n+3$ is odd. We will reduce the genus of the surface obtained by the Seifert algorithm by performing a particular isotopy of $K$, which is described in~\cite{Gabai:Detecting} and pictured in Figure~\ref{fig:isotopy3min-real}. Applying the Seifert algorithm to the new diagram gives a lower genus Seifert surface $F$ for $K$, suitable to apply Lemma \ref{lem:det}, but not necessarily a genus minimizing Seifert surface. We obtain
\[
g(F)=\frac{1}{2}(6n+q-3).
\]
By Equation~\ref{eqn:det}, 
\[
	det(K) = |4n(2n+1)q - (2n+1)^2q - 2n(2n+1)^2  |.
\]
In general, $det(K)>2g(F)+1 \geq 2g(K)+1$ is satisfied whenever 
\begin{equation}
\label{eqn:inequality}
	\big( 4n(2n+1) - (2n+1)^2 - 1 \big)q > 2n(2n+1)^2 +6n -2,
\end{equation}
and since $q \geq 2n+3$, this inequality holds for all $n>3$. Moreover, if $n=1$, $n=2$, or $n=3$, then $det(K)>2g(F)+1\geq 2g(K)+1$ whenever $q\geq 13$, $q\geq9$, or $q\geq11$, respectively. 
The only pairs $(n,q)$ not satisfying the inequality (\ref{eqn:inequality}) are: $(3,9), (2,7), (1,11),(1,9), (1,7),$ and $(1,5)$. The Alexander polynomials for the knots corresponding to the first five pairs are:
\begin{eqnarray*}
	&\Delta_{(7,-9,7,-6)} &= t^{-5} -t^{-4}+2t^{-2} -3t^{-1}+3 -3t+2t^2-t^4+t^5 \\
	&\Delta_{(5,-7,5,-4)} &= t^{-5} -t^{-4}+t^{-2} -2t^{-1}+3 -2t+t^2-t^4+t^5 \\
	&\Delta_{(3,-11,3,-2)} &= t^{-6} -t^{-5} +2t^{-3} -3t^{-2} +3t^{-1} -3 +3t-3t^2+2t^3-t^5+t^6	\\
	&\Delta_{(3,-9,3,-2)} &= t^{-7} - t^{-6} + t^{-4} - 2t^{-3} +3t^{-2} -4t^{-1} +5 -4t +3t^2 -2t^3 + t^4 -t^6 +t^7 \\
	&\Delta_{(3,-7,3,-2)} &= t^{-4} - t^{-3} +2t^{-1} -3 +2t -t^3 + t^4.
\end{eqnarray*}
Clearly each polynomial has some coefficient with $|a_s|>1$. The last pair of integers corresponds to $K=(3,-5,3,-2)$, the knot exempted in the statement of the lemma.
\end{proof}
The Alexander polynomial of $K$,
\[
	\Delta_K(t) = t^{-3}-t^{-2} +1 -t^2 + t^3,
\]
does not obstruct $K$ from admitting an $L$-space surgery. Therefore, we compute the knot Floer homology of $K$ in Table~\ref{table:hfkpretzel} using the Python program for $\hfk$ with $\F_2$ coefficients by Droz~\cite{Droz:Program} to observe directly that there exist Alexander gradings $s$ such that $\dim \hfk(K,s;\F_2)\geq 2$.  This implies that for these Alexander gradings, $\hfk(K,s) \not \cong 0$ or $\mathbb{Z}$.  Therefore, $K$ is not an $L$-space knot.  This completes the proof of Theorem~\ref{thm:pretzels} for Type 3-min pretzel knots.  
\begin{table}
	$\bgroup\arraycolsep=6pt 
		\begin{array}{|r|rrrrrrr|}\hline
			\multicolumn{8}{|c|}{\hfk(3,-5,3,-2) } \\ \hline
			&-3 & -2 & -1 & 0 & 1 & 2 & 3 \\ \hline
			4& & & & &  & & \F \\
			3& & & & &  &\F^3 &\\
			2& & & & & \F^4 & \F^2& \\
			1& & & & \F^3& \F^4 & &  \\
			0& & &\F^4 &\F^4  & &&  \\
			-1& & \F^3 & \F^4 & & & & \\
			-2& \F& \F^2 & & & & &  \\ \hline 
		\end{array} 
	\egroup$  \\
	\caption{The knot Floer homology groups of the knot $(3,-5,3,-2)$ are displayed with Maslov grading on the vertical axis and Alexander grading on the horizontal axis.}
	\label{table:hfkpretzel}	
\end{table}	

\subsection{Type 3-2A}
\label{subsec:type3-2a}
After mirroring, we may assume that for pretzel knots of Type 3-2A, there are $p+2$ positive tangles and $p$ negative tangles, and that of these $2p+2$ tangles, there is exactly one even tangle. Note that the property of being a Type 3-2A fibered pretzel knot does not change under mutation. 
\begin{lemma}
\label{lem:type3-2a}
Let $K$ be as above. If $K$ does not have exactly $p$ negative tangles of length one, $|a_{-g(K)+1}|\geq 2$.
\end{lemma}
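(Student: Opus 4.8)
The plan is to follow the template established in Lemmas~\ref{lem:type2a-2} and \ref{lem:type3-min}: identify a unique minimally $A$-graded state $\xmin$, enumerate all states in $A$-grading $-g(K)+1$, compute their $M$-gradings, and show the signed count yields $|a_{-g(K)+1}|\geq 2$. Since $K$ is Type 3-2A, every tangle is an $m_i$, so all edges in $G_B$ carry $\eta(e)=\pm1$ and every edge in $G_W$ carries $\eta(e^*)=0$; thus only the black graph $G_B$ and its maximal trees matter. First I would apply mutations (which preserve the Type 3-2A fibered condition, genus, and $\Delta_K(t)$, as noted in Section~\ref{subsec:det}) to arrange the tangles in alternating sign, say $n_i>0$ for $i$ odd and $n_i<0$ for $i$ even, with the two extra positive tangles and the single even tangle placed at convenient positions. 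With this arrangement, $\eta(e)=-1$ along $T(n_i)$ for $i$ odd and $\eta(e)=+1$ for $i$ even.

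Next I would orient $K$ so that the strands of the first tangle point downward and exhibit the candidate $\xmin$ whose trunk is the even tangle (or the tangle that makes every $\sigma(e)\eta(e)$ equal to $-1$ off the trunk), with no other edges incident to the root. Because the tangles alternate in sign, each off-trunk intersection contributes $\sigma(e)\eta(e)=-1$, while the trunk contributes $+1$; the usual argument---any deviation either forces an edge incident to the root contributing $+1$ or moves the trunk to a longer tangle---then shows $A(\xmin)$ is minimal and $\xmin$ is unique. By Lemma~\ref{lem:trades} there are $r-\ell-1$ trades in bigrading $(-g(K)+1, M(\xmin)+1)$, where $\ell$ counts the length-one tangles off the trunk. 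Since there is no $m_{ij}$, the only non-trade states in $A$-grading $-g(K)+1$ are those whose trunk $T(n_j)$ has $|n_j|$ one more than the minimal length, exactly as in Lemma~\ref{lem:type3-min}; these are the states $\x_j$, and I would compute $M(\x_j)-M(\xmin)$ by the same telescoping sum over $\eta$-values along $T(n_j)$ and $T(n_{2p})$, obtaining a value of $0$ or $+1$ depending on the parity of $j$.

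The hypothesis that $K$ does \emph{not} have exactly $p$ negative tangles of length one is what drives the count: if there were exactly $p$ negative length-one tangles, then (since there are $p$ negative tangles total) every negative tangle would be $-1$, a configuration excluded from the present lemma and handled separately. Under the stated hypothesis, either some negative tangle has length $>1$, forcing extra trades that land in $M(\xmin)+1$, or the length-one tangles are positive, changing which $\x_j$ states appear and in which $M$-grading. Letting $E$ and $O$ count the $\x_j$ with $j$ even and odd respectively, I would assemble the coefficient as $|a_{-g(K)+1}|=|(r-\ell-1)+E-O|$ and bound it below using $r=2p+2$ together with the hypothesis, exactly paralleling the estimate $|a_{-g(K)+1}|\geq p-1$ in Lemma~\ref{lem:type3-min}. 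The main obstacle I anticipate is the bookkeeping: correctly accounting for how positive versus negative length-one tangles interact with the trunk and with the single even tangle, and verifying that the hypothesis precisely excludes the sole cancelling configuration. Any residual small cases not covered by the generic inequality would be dispatched either by switching to $\det(K)$ via Lemma~\ref{lem:det} or by direct computation of $\Delta_K(t)$, as was done for the sporadic knots in the Type 2A and Type 3-min arguments.
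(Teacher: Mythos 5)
Your overall template is right, but you have imported the wrong model for the minimal state and, as a consequence, you misidentify the non-trade states in $A$-grading $-g(K)+1$. In Type 3-2A there are $p+2$ positive and $p$ negative tangles, so after mutation the paper arranges them as $+,-,+,-,\dots,+,-,+,+$ and takes the trunk of $\xmin$ along $T(n_{2p+2})$, one of the two \emph{surplus positive} tangles (not the even tangle, and not a minimal-length tangle). With that arrangement $\eta(e)=-1$ on $T(n_i)$ for $i$ odd \emph{and} for $i=2p+2$, so \emph{every} edge of $T_{\xmin}$, including the trunk, contributes $\sigma(e)\eta(e)=-1$; this is what makes $\xmin$ minimal and unique regardless of tangle lengths. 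Your assertion that the trunk contributes $+1$ per edge is the Type 3-min picture (where positives and negatives are balanced and the trunk must be the unique shortest tangle), and it is inconsistent here: if the trunk contributed $+1$ per edge along, say, a long even tangle, your candidate would not be minimal.

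This matters because the two pictures give different answers for which trunk-changes land in $A$-grading $-g(K)+1$. When every edge of $T_{\xmin}$ contributes $-1$, moving the trunk to $T(n_j)$ raises the $A$-grading by $|n_j|$, so the non-trade states correspond exactly to the \emph{length-one} tangles (there are $\ell$ of them), with $M(\x_j)=M(\xmin)+1$ or $M(\xmin)+2$ according to the sign of $n_j$ as in Claim~\ref{claim:mgradings1} --- not to tangles of length one more than the minimum with $M$-shifts $0$ or $+1$ as in Claim~\ref{claim:mgradings2}, which is what you propose. Your count $|(r-\ell-1)+E-O|$ would therefore enumerate the wrong states. The correct count, using that a minimal presentation forces all length-one tangles to have the same sign, is $|a_{-g(K)+1}|=(2p-\ell+1)+\ell=2p+1$ if they are positive and $(2p-\ell+1)-\ell$ if they are negative; the latter exceeds $1$ precisely when $\ell<p$, i.e.\ when not every negative tangle is a $-1$, which is exactly the hypothesis of the lemma. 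No appeal to $\det(K)$ or sporadic computations is needed in this lemma (those are deferred to the complementary case).
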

\begin{proof}
Up to mutation, we may assume that $n_i$ is positive when $i$ is odd and that $n_i$ is negative when $i$ is even, except $n_{2p+2}$, which is positive. In $G_B$, $e\in T(n_i)$ is labeled $\eta(e)=-1$ for $i$ odd or $i=2p+2$ and $\eta(e)=+1$ for $i$ even, $i \neq 2p+2$. Orient $K$ so that the strands of the first tangle point downward. Then there is a unique state $\xmin$ with minimal $A$-grading represented by a black tree with trunk $T(n_{2p+2})$, as in Lemma~\ref{lem:type2a-2}. In particular, for all $e\in T_{\xmin}$, $\sigma(e)=+1$ if $e\in T(n_i)$ for $i$ odd or $i=2p+2$ and $\sigma(e)=-1$ if $i$ even, $i \neq 2p+2$. Every edge in $T_{\xmin}$ contributes $\eta(e)\sigma(e)=-1$ to the sum for $A(\xmin)$, so $\xmin$ is clearly minimally graded. It is unique because in any other tree there will be an edge contributing $\sigma(e)\eta(e)=+1$ to the $A$-grading. 

There are $2p-\ell+1$ trades in bigrading $(-g(K)+1, M(\xmin)+1)$ by Lemma~\ref{lem:trades}, where $\ell$ is the number of tangles of length one not counting the trunk. There are precisely $\ell$ other states in $A$-grading $-g(K)+1$. Each of these additional states, denoted $\x_j$, corresponds to a tangle $n_j$ of length one, as obtained in Lemma~\ref{lem:type2a-2}. Then,
\[
	M(\x_j) =\left\{\begin{array}{ll}
	M(\xmin)+1 & j \text{ is odd} \\
	M(\xmin)+2 & j\neq 2p+2 \text{ is even,}
	\end{array}\right.
\]
as in Claim~\ref{claim:mgradings1}.
If the length one tangles are positive (i.e. each $j$ is odd), then 
\[
	|a_{-g(K)+1}| = (2p-\ell+1) +\ell = 2p+1 > 2,
\]
and we are done. If the length one tangles are negative, then
\[
	|a_{-g(K)+1}| = (2p -\ell +1) -\ell  > 1 \Longleftrightarrow \ell < p. 
\] 
This verifies the statement of Lemma~\ref{lem:type3-2a}.
\end{proof}
The next lemma will complete the proof of Theorem~\ref{thm:pretzels} for Type 3-2A pretzel knots.
\begin{lemma}
Let $K$ be a Type 3-2A knot with exactly $p$ negative length one tangles, and $p+2$ positive tangles. Then there exists some coefficient $a_s$ of $\Delta_K(t)$ with $|a_s|>1$.
\end{lemma}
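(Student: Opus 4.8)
The plan is to follow the determinant strategy of Lemma~\ref{lem:type2a-3} rather than a state-sum count. First I would fix the form of $K$: since $K$ is minimally presented and carries $p$ negative length-one tangles, there are no $+1$ tangles, so all $p+2$ positive tangles have length $\geq 2$; the unique even tangle is therefore positive, and minimality (no $\pm 1,\mp 2$ pair) forces it to be $\geq 4$. Thus, up to mirroring,
\[
	K=(\underbrace{-1,\dots,-1}_{p},\bar m, w_1,\dots,w_{p+1}),
\]
with $\bar m\geq 4$ even and each $w_i\geq 3$ odd. This is exactly the configuration in which Lemma~\ref{lem:type3-2a} gives no information: the $\ell=p$ states $\x_j$ all lie in $M$-grading $M(\xmin)+2$ and the $p+1$ trades in $M(\xmin)+1$, so $|a_{-g(K)+1}|=(p+1)-p=1$. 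I would therefore pass to Lemma~\ref{lem:det}.

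Next I would record the genus and determinant. As $K$ is a fibered Type 3-2A knot, Theorem~\ref{thm:fiberedpretzels} identifies the fiber with the Seifert surface of the pretzel diagram; since every tangle is consistently oriented (an $m_i$), the Seifert algorithm produces exactly $r=2p+2$ Seifert circles from the $\sum_i|n_i|=p+\bar m+\sum_i w_i$ crossings, giving
\[
	g(K)=\tfrac12\Big(\bar m+\sum_{i=1}^{p+1}w_i-p-1\Big),\qquad 2g(K)+1=\bar m+\sum_{i=1}^{p+1}w_i-p.
\]
Applying Equation~\ref{eqn:det} to the mirror (which turns the $-1$'s into $+1$'s) gives, with $N=\bar m\,w_1\cdots w_{p+1}$,
\[
	\det(K)=N\Big(p-\tfrac1{\bar m}-\sum_{i=1}^{p+1}\tfrac1{w_i}\Big).
\]

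The crux is the inequality $\det(K)>2g(K)+1$. Each of the $p+2$ reciprocals above is $\leq\tfrac13$, so $\det(K)\geq N\tfrac{2p-2}{3}$; together with the elementary bound $N\geq 3(\bar m+\sum_i w_i)$ (a product of at least three integers $\geq 3$ is at least three times their sum) this yields $\det(K)\geq(2p-2)(\bar m+\sum_i w_i)>2g(K)+1$ whenever $p\geq 2$. The case $p=1$ is the main obstacle, since there the factor $2p-2$ collapses: writing $K=(-1,\bar m,w_1,w_2)$, the inequality becomes $f(\bar m,w_1,w_2):=\bar m w_1w_2-w_1w_2-\bar m w_1-\bar m w_2-\bar m-w_1-w_2+1>0$. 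Here $f$ is strictly increasing in each argument on the relevant range — its partials factor as $(w_1-1)(w_2-1)-2$ and $(\bar m-1)(w_2-1)-2$, both positive — so the inequality fails only at the two minimal configurations $(\bar m,w_1,w_2)=(4,3,3)$ and $(6,3,3)$.

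It remains only to treat the two exceptional knots $(-1,4,3,3)$ and $(-1,6,3,3)$. For these I would compute $\Delta_K(t)$ directly (via the KnotTheory package, as in Lemma~\ref{lem:type2a-1} and the Type 3-min cases) and exhibit a coefficient with $|a_s|>1$, which must exist since no Type 3-2A knot is an $L$-space knot. Combined with Lemma~\ref{lem:det} in all other cases, this completes the proof.
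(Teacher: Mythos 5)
Your proposal follows the same strategy as the paper --- establish $\det(K)>2g(K)+1$ via Equation~\ref{eqn:det} and Lemma~\ref{lem:det}, splitting into $p\geq 2$ and $p=1$, and dispose of the finitely many exceptions by computing $\Delta_K(t)$ directly --- and all of your estimates check out (the genus and determinant formulas agree with the paper's, the bound $N\geq 3(\bar m+\sum w_i)$ is a correct induction, and the partial derivatives of $f$ do factor and stay positive on the stated range). The noteworthy point is that your more careful treatment of $p=1$ is actually \emph{sharper than the paper's}: the paper claims that whenever some $w_i\geq 5$ one has $1-\sum 1/w_i\geq 13/60$, hence $\det(K)>2g(K)+1$, but for $(w_1,w_2,w_3)=(3,3,6)$ one gets $1-\tfrac13-\tfrac13-\tfrac16=\tfrac16<\tfrac{13}{60}$, and indeed $\det\bigl((-1,3,3,6)\bigr)=9<11=2g(K)+1$, so the determinant obstruction genuinely fails there; the paper treats only $(-1,3,3,4)$ as exceptional. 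Your monotonicity argument correctly isolates \emph{both} failures, $(\bar m,w_1,w_2)=(4,3,3)$ and $(6,3,3)$. The lemma itself survives: a direct computation gives $\Delta_{(-1,3,3,6)}(t)=t^{-5}-t^{-4}+2t^{-2}-3t^{-1}+3-3t+2t^2-t^4+t^5$ (consistent with $\det=9$ and $g=5$), which has coefficients of absolute value $3$, so your plan to finish by explicit computation succeeds for both exceptional knots. Two small quibbles: you should note explicitly that $p\geq1$ (a Type 3 knot has at least four tangles, which is what guarantees there are no $+1$ tangles and that the even tangle is positive), and the aside that a large coefficient ``must exist since no Type 3-2A knot is an $L$-space knot'' is circular as stated --- the computation itself, not that remark, is what closes the case.
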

\begin{proof}
After reindexing the tangles,
\[
	K = (\underbrace{-1,\dots, -1}_{p}, w_1, \dots, w_{p+2}),
\]
where there exists some $i$ such that $w_i \geq4$ is even (since $K$ is minimally presented, $w_i\neq 2$ for any $i$) and for all other $i$, $w_i\geq 3$ is odd. By Theorem~\ref{thm:fiberedpretzels}, the genus of $K$ is obtained by applying the Seifert algorithm to the standard projection,
\begin{equation*}
	g(K)=\frac{1}{2}\big( \sum_{i=1}^{p+2}(w_i-1) + 1 \big).
\end{equation*}
Let $W=w_1\cdots w_{p+2}$. Using Equation~\ref{eqn:det},
\begin{eqnarray*}
	det(K) &=& \big| W \big(-p+ \sum^{p+2}_{i=1}\frac{1}{w_i} \big) \big| \\
	 &\geq&   \big|  W(p - \frac{1}{4} -\sum^{p+1}_{i=1}\frac{1}{3}   ) \big|  \\
	 &\geq&   W\cdot \frac{8p-7}{12}.
\end{eqnarray*}
Whenever $p\geq 2$, we have
\[
	det(K) > ( \sum_{i=1}^{p+2}w_i) -p =  2g(K)+1.
\]
Now apply Lemma \ref{lem:det}. If $p=1$ then $K= (-1, w_1, w_2, w_3)$. Now suppose one of the $w_i$ is at least five. Then,
\begin{eqnarray*}
	det(K) &=& \big| W \big(1- \sum^{3}_{i=1}\frac{1}{w_i} \big) \big| \\
	&\geq&  W\cdot \frac{13}{60}  \\
	&>& (\sum_{i=1}^{3}w_i ) -1 \\
	&=&  2g(K)+1.
\end{eqnarray*}
The only Type 3-2A fibered pretzel knot with four or more strands which has not been addressed is $K=(-1,3,3,4)$, which has Alexander polynomial
\[
	\Delta_{(-1,3,3,4)}= t^{-4} - t^{-3} + 2t^{-1} - 3 +2t -t^3 +t^4.
\]
Clearly there exist coefficients with $|a_s|>1$.
\end{proof}
\subsection{Type 3-2B}
\label{subsec:type3-2b}
Let $K$ be a fibered Type 3-2B pretzel knot. There are $p$ positive tangles, and $p$ negative tangles. By assumption the auxiliary link $L'$ is not isotopic to $\pm(2,-2,\dots,2,-2)$, and $K$ is fibered if and only if $L'$ is fibered. There are no tangles of $L'$ equal to $\pm 1$ and therefore $L'$ cannot be of Type 1-(1). Since there is no $m_{ij}$, there are no tangles equal to $\pm4$, and so we may also rule out Type 1-(3). Therefore $L'$ must fall into the Type 1-(2) subcase of Type 1 knots, which are of the form $\pm(2,-2,\dots, 2,-2, n)$, where $n\in \Z$. This can only happen if $n=\pm 2$ and $K$ contains a unique tangle of length one. 

Up to mirroring and isotopy, $K= (n_1\dots, n_{2p})$, where $n_i$ is positive for $i$ odd, negative for $i$ even, and $n_{2p}=-1$. Orient $K$ so that the strands of the first tangle point downward. Then $\eta(e)=-1$ when $e\in T(n_i)$ for $i$ odd, and $\eta(e)=+1$ when $e\in T(n_i)$ for $i$ even. As in the proof of Lemma~\ref{lem:type3-min}, there exists a state $\xmin$ with minimal $A$-grading with trunk $T(n_{2p})$, and with the property that $\sigma(e)=+1$ when $e\in T(n_i)$ for $i$ odd, and $\sigma(e)=-1$ when $e\in T(n_i)$ for $i$ even. The only possible states which are not trades must occur along tangles of length two. Since there is a single even tangle, there is at most one such state. By Equation \ref{eqn:euler} and Lemma \ref{lem:trades} this implies that $|a_{-g(K)+1}|$ is at least $2p-2$, and hence $|a_{-g(K)+1}|\geq 2$.

This completes the case analysis required to prove Theorem \ref{thm:pretzels}. 
\begin{footnotesize}
	\bibliographystyle{alpha}
	\bibliography{bibliography}
\end{footnotesize}

\end{document}